\newcommand{\diam}{\mathop{\mathrm{diam}}}
\newtheorem{thm}{Theorem}
\newtheorem{pro}[thm]{Proposition}
\newtheorem{lem}[thm]{Lemma}
\newtheorem{cor}[thm]{Corollary}
\newtheorem{qes}{Question}
\theoremstyle{remark}
\newtheorem{rmk}[thm]{Remark}
\newtheorem{exm}{Example}
\newcommand{\cc}{{\,\mathcal C}}
\newcommand{\lc}{{\,\mathcal L}}
\newcommand{\pc}{{\,\mathcal P}}
\newcommand{\rc}{{\,\mathcal R}}
\newcommand{\NN}{{\mathbb N}}
\newcommand{\RR}{{\mathbb R}}
\newcommand{\ZZ}{{\mathbb Z}}
\newcommand{\lra}[1]{\langle #1 \rangle}
\title{On the partitions with Sturmian-like refinements}
\date{\today}
\begin{document}
\maketitle
\author{
M. Kupsa
\footnote{\tiny Institute of Information Theory and
  Automation, The Academy of Sciences of the Czech Republic, Prague 8, CZ-18208}
\footnote{\tiny Faculty of Information Technology,
  Czech Technical University in Prague, Prague 6, CZ-16000.},
Š. Starosta$^2$ 
}

\begin{abstract}
In the dynamics of a rotation of the unit circle by an irrational angle $\alpha\in(0,1)$, we study the evolution of partitions whose atoms are finite unions of left-closed right-open intervals with endpoints lying on the past trajectory of the point $0$. 
Unlike the standard framework, we focus on partitions whose atoms are disconnected sets.
We show that the refinements of these partitions eventually coincide with the refinements of a preimage of the Sturmian partition, which consists of two intervals $[0,1-\alpha)$ and $[1-\alpha,1)$. 
In particular, the refinements of the partitions eventually consist of connected sets, i.e., intervals. We reformulate this result in terms of Sturmian subshifts: we show that for every non-trivial factor mapping from a one-sided Sturmian subshift, satisfying a mild technical assumption, the sliding block code of sufficiently large length induced by the mapping is injective. 
\end{abstract}

\section{Main results}
\label{sec:main-results}
The dynamics given by a mapping $T$ from a set $X$ to itself is often
described using the coding of orbits of points with respect to some
specific partition $\rc$ of $X$. This standard approach of symbolic
dynamics involves an analysis of the evolution of the partition $\rc$
with respect to $T$, where by the evolution we mean the refining
sequence of partitions $\rc^n$ for $n\geq 1$, defined as follows:
\[
\mathcal{R}^{n}=\left\{\bigcap_{k=0}^{n-1}T^{-k}R_k \ \middle| \
  R_0,\ldots, R_{n-1} \in \rc\right\} \setminus \{\emptyset\}.
\]
The partition $\rc^n$ is the common refinement of $\rc$ and its preimages $T^{-j}\rc$ for all integers $j$ such that $1 \leq j<n$.
We call $\rc^n$ the \emph{$n$-th refinement of $\rc$}.

Focusing on the dynamics of an irrational rotation of the unit circle,
the most studied partitions are those inducing Sturmian
sequences. Given an irrational $\alpha\in (0,1)$, the unit circle is
the factor group $X=\RR/\ZZ$, represented by the fundamental domain
$[0,1)$, and the rotation $T$ is the transformation of $X$ given
by the formula $T(x)=(x+\alpha) \bmod{1}$. The partition inducing
Sturmian sequences consists of two intervals $P_{0}=[0,1-\alpha)$ and
$P_{1}=[1-\alpha,1)$.  We call this partition {\em Sturmian} and
denote it by $\mathcal{P}$, i.e., $\mathcal{P} = \{ P_0, P_1 \}$.

The evolution of the partition $\pc$ is closely related to
combinatorial and other properties of the Sturmian sequence obtained
as a coding of the orbit of the point $0$ with respect to $\pc$ (for
detailed study of Sturmian sequences see \cite{Fo02}, \cite{Ku03-2} or
\cite{HM40}). It is well known that the refinement $\pc^n$, for $n\in\NN$,
consists of $n+1$ intervals in which the points $T^{-k}(0)$, for $0\leq
k\leq n$, divide the unit circle. The Three lengths theorem, due to S\'{o}s
(\cite{So58}), claims that these intervals are of two or three
lengths. The theorem also describes these lengths in terms of
convergents of $\alpha$.
Because of the trivial identity $(\pc^m)^n=\pc^{m+n-1}$, the evolution
of any refinement $\pc^m$ is covered by the mentioned results as well.

There is much less known about the evolution of other partitions.
Combinatorial results for coding with respect to two-interval or
finite-interval partitions with arbitrary endpoints were obtained
in \cite{Di98}, \cite{Al96} and \cite{AB98}. 
\\

In this paper, we would like to introduce another class of partitions
whose evolution can be surprisingly well described. The class consists
of all partitions whose elements are finite unions of right-closed
left-open intervals with endpoints from the set of preimages of
the zero $T^{-i}(0)$ for $i\in\NN$. Although the endpoints are chosen in a standard manner from the past trajectory of the point zero, the partitions stand outside the classical framework of coding of rotations because, in our case, the atoms of the partitions are usually disconnected sets. Partitions from this class are
closely related to the partition $\pc$, namely $\rc$ belongs to the
class if and only if $\rc$ is rougher than $\pc^n$ for some $n\in\NN$ (a complete definition follows in Preliminaries).
In other words, a partition from the class consists of the sets that
belongs to the algebra of sets generated by $\pc^n$, for some $n$ (the
sets are $\pc^n$-measurable). We call these partitions {\em Sturmian-measurable} throughout the paper.

Since the partition $\pc$ and its preimages $T^{-j}\pc$, $j\in\NN$,
generate the $\sigma$-field of Borel subsets of the unit interval, the
class of all Sturmian-measurable partitions has the following
interesting property; it is a dense set among all Borel partitions
with respect to Rokhlin distance or entropy distance.  Our main result
shows that the refinements of any partition from the class coincide
with the refinements of some preimage of $\pc$.

In the following theorem, we introduce the main result of our paper. It concerns the refinements of Sturmian-measurable partitions that are non-trivial, i.e., consisting of at least two sets.   
\begin{thm}\label{thm:mainresult}
  Let $n\in\NN$.  If $\rc$ is a non-trivial partition rougher than
  $\pc^n$, then
$$ \rc^k=T^{-\ell} \pc^{m} =\left(T^{-\ell}\pc\right)^{m},\qquad\text{ for some } k,\ell,m\in\NN \text{ such that } \ell<n.$$
In other words, $\rc^k$ is the partition of the unit circle into a
union of right-closed left-open intervals whose endpoints are the
preimages of zero $T^{-i}(0)$ for $i \in \NN$ such that $\ell \leq i\leq \ell+m$.
\end{thm}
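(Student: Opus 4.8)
The plan is to translate everything into the breakpoints of the partitions. Write $B(\rc)$ for the set of endpoints of the intervals whose unions make up the elements of $\rc$; since $\rc$ is rougher than $\pc^n$, $B(\rc)=\{T^{-i}(0):i\in S\}$ for some $S\subseteq\{0,1,\ldots,n\}$. A partition of the circle into unions of intervals cannot have exactly one breakpoint (following the circle around, the element would change at that point and could not change back), so non-triviality of $\rc$ forces $|S|\ge 2$, hence $\ell:=\min S$ satisfies $\ell<\max S\le n$, i.e. $\ell<n$. Moreover a refinement never destroys a breakpoint: if $q$ is a breakpoint of $T^{-j}\rc$ with $j<k$, the two sides of $q$ already lie in different elements of $T^{-j}\rc$, hence in different elements of the finer partition $\rc^k$. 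Therefore the breakpoint set of $\rc^k$ is $\bigcup_{j=0}^{k-1}T^{-j}B(\rc)=\{T^{-i}(0):i\in S+\{0,1,\ldots,k-1\}\}$, and for $k\ge n$ the arithmetic sum $S+\{0,1,\ldots,k-1\}$ fills the whole interval $\{\ell,\ell+1,\ldots,\ell+m\}$ with $m:=(\max S-\min S)+k-1$. So for $k\ge n$ the partition $\rc^k$ has breakpoint set exactly $\{T^{-i}(0):\ell\le i\le\ell+m\}=\partial(T^{-\ell}\pc^m)$, with each breakpoint genuinely separating two different elements.

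It then suffices to prove the following statement $(\star)$: for all large enough $k$, every element of $\rc^k$ is an interval. Indeed, on a circle a partition whose atoms are unions of the $m+1$ arcs cut out by $m+1$ points has at most $m+1$ atoms, with equality exactly when every atom is a single one of those arcs; so once $(\star)$ holds for some $k\ge n$, the element of $\rc^k$ containing each of the $m+1$ minimal arcs is that arc itself, and $\rc^k$ is the partition into all arcs with endpoints $T^{-i}(0)$, $\ell\le i\le\ell+m$. This partition equals $T^{-\ell}\pc^m=(T^{-\ell}\pc)^m$, and $\ell<n$, which is the theorem.

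As preparation I would record a rigidity lemma: the itinerary map $x\mapsto(\rc(T^jx))_{j\ge0}$ separates points, and distinct points even have itineraries differing infinitely often. Fix an element $E$ of $\rc$ with $\emptyset\ne E\ne X$. If $\one_E(T^jx)=\one_E(T^jy)$ for all $j\ge0$, then averaging $(\one_E(T^jx)-\one_E(T^jy))e^{-2\pi i\nu j\alpha}$ over $0\le j<N$ and letting $N\to\infty$ (using that $\alpha$ is irrational) yields $\widehat{\one_E}(\nu)\,(e^{2\pi i\nu x}-e^{2\pi i\nu y})=0$ for every $\nu\in\ZZ$. Since all endpoints of $E$ lie in $\ZZ\alpha+\ZZ$, the function $\one_E$ has no period $1/d$ with $d\ge2$ — such a period would make the breakpoint set of $E$ be $1/d$-periodic and hence $1/d\in\ZZ\alpha+\ZZ$, impossible for irrational $\alpha$ — so the greatest common divisor of $\{\nu:\widehat{\one_E}(\nu)\ne0\}$ is $1$, whence $x-y\in\ZZ$, i.e. $x=y$. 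The ``infinitely often'' refinement follows by applying this to $T^Jx$ and $T^Jy$.

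The main obstacle is upgrading this pointwise separation to the uniform statement $(\star)$, and this is where I expect the real work. Suppose $(\star)$ fails, so $\rc^k$ has a disconnected element $R_k$ for infinitely many $k$. For any two of the arcs making up $R_k$, their midpoints share the first $k$ symbols of the $\rc$-itinerary; passing to a subsequence, their limits have itineraries agreeing off a finite set, hence coincide by the lemma, so $\diam(R_k)\to0$; since the arcs of $\pc^m$ have length $\to0$ as $m\to\infty$ (three-distance theorem, or equidistribution), this forces $\diam(\Delta_k)\to0$ for $\Delta_k$ the smallest arc containing $R_k$. Being disconnected, $R_k\subseteq\Delta_k$ splits into at least two maximal runs of consecutive arcs of $T^{-\ell}\pc^m$; taking $J$ to be the last arc of one run, the next arc $K$ lies outside $R_k$, and continuing one reaches the first arc $J'$ of the next run, with $J,K,J'\subseteq\Delta_k$ and $J,J'\subseteq R_k$. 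As $J$ and $J'$ lie in the same element of $\rc^k$ they share the whole first-$k$ itinerary, while $K$ does not, so at some time $j^*<k$ the length-preserving images $T^{j^*}J$ and $T^{j^*}K$ lie in different elements of $\rc$ whereas $T^{j^*}J'$ lies in the same element as $T^{j^*}J$. Travelling along the arc $T^{j^*}\Delta_k$ from $T^{j^*}J$ through $T^{j^*}K$ to $T^{j^*}J'$ then crosses $B(\rc)$ at least twice, producing two distinct points of the finite set $B(\rc)$ inside an arc of length $\diam(\Delta_k)\to0$, contradicting the positive minimal distance between distinct points of $B(\rc)$. (One also uses that for $j<k$ the image under $T^j$ of any arc of $T^{-\ell}\pc^m$ meets no breakpoint of $\rc$, which is what makes $\rc(T^jJ)$ well defined.) This gives $(\star)$, and with it the theorem.
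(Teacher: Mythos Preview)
Your argument is correct and takes a genuinely different route from the paper's.

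The paper proves Theorem~\ref{thm:mainresult} as a corollary of the quantitative Theorem~\ref{thm:precise}, which in turn is reduced to Proposition~\ref{pro:rk-1}. That proposition is established by a detailed combinatorial analysis of the $\rc$-codes of the Rokhlin towers $\Lambda(I_k,0,q_{k-1})$ and $\Lambda(I_{k-1},0,q_k)$ arising from the continued-fraction structure (Proposition~\ref{pro:twolength} and Lemma~\ref{lem:itercode}): one writes down the words $u,v,w,w',w'',z$, computes their cyclic periods, and compares $\rc$-names level by level through Lemmas~\ref{lem:ABC} and~\ref{lem:E}. This yields the explicit bound $k=r_{k+3}+2r_k-n-2$.

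Your approach bypasses all of this combinatorics. You reduce to the connectedness statement $(\star)$ via the straightforward cut-index bookkeeping, and then prove $(\star)$ by a soft compactness-plus-rigidity argument: an ergodic/Fourier computation shows that distinct points have $\rc$-itineraries differing infinitely often (using only that the breakpoints of $\rc$ lie in $\ZZ\alpha+\ZZ$, so $\one_E$ has no rational period), and then a diameter-shrinking argument forces two distinct points of the fixed finite set $B(\rc)$ into arbitrarily short arcs, a contradiction. Everything checks: the itinerary-continuity step works because each orbit meets $B(\rc)$ only finitely often, and the final ``two crossings'' step is justified by your parenthetical that $T^j$ of an arc of $T^{-\ell}\pc^m$ avoids $B(\rc)$ for $j<k$ (equivalently $S+j\subseteq\{\ell,\ldots,\ell+m\}$).

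What each approach buys: the paper's method gives an effective bound on $k$ in terms of the continued fraction of $\alpha$, which is essential for Theorem~\ref{thm:precise} and for the sliding-block-code application in Proposition~\ref{pro:sturm-sliding}. Your method gives only existence, but is conceptually cleaner, needs neither the three-distance theorem in detail nor any continued-fraction arithmetic, and isolates exactly where the Sturmian hypothesis enters (the no-rational-period step). This also suggests that your argument might adapt to the first open problem in Section~\ref{sec:open-problems}: the only place Sturmian-measurability is used in your rigidity lemma is to rule out rational periods of $\one_E$, which is precisely the obstruction identified there.
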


Let us notice that whenever the partition $\rc^k$ equals
$\left(T^{-\ell}\pc\right)^{m}$, then for $i\in\NN$ every higher
refinement $\rc^{k+i}$ equals the higher refinement
$\left(T^{-\ell}\pc\right)^{m+i}$. In this case, the sequences
$(\rc^k)_{k\in\NN}$ and $((T^{-\ell}\pc)^m)_{m\in\NN}$ have the same
tail.

The least $k$ such that the partition $\rc^k$ is of the form described
in Theorem \ref{thm:mainresult} strongly depends on $\rc$. In the next
theorem, we provide an upper bound for the power $k$ in terms of
convergents of $\alpha$.  The continued fraction expansion of $\alpha$
is the following:
$$
\alpha=[c_1,c_2,c_3,\dots]=\cfrac{1}{c_1+\cfrac1{c_2+\cfrac1{c_3+\ldots}}},\qquad
c_i\in\NN \setminus \{0\}.
$$
The convergents of $\alpha$ are then $\frac{p_k}{q_k}$ where
$p_k=c_kp_{k-1}+p_{k-2}$, $p_0=0,p_1=1$ and $q_k=c_kq_{k-1}+q_{k-2}$,
$q_0=1,q_1=c_1$.  Denote $r_k=q_k+q_{k-1}$ for $k\geq 1$ and $r_0=1$.

\begin{thm}\label{thm:precise}
  Let $\rc$ be a Sturmian-measurable non-trivial partition. Let $\ell$
  be the largest positive integer and $n$ the least positive integer
  such that $\rc$ is rougher than $T^{-\ell}\pc^n$.  If $k\in\NN$ such
  that $r_{k-1}\leq n<r_k$, then
$$\rc^{r_{k+3}+2r_k-n-2}=T^{-\ell}\pc^{r_{k+3}+2r_k-3}=\left(T^{-\ell}\pc\right)^{r_{k+3}+2r_k-3}.$$
\end{thm}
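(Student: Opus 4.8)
The plan is to first normalise $\rc$, then reduce the assertion to a statement about separating pairs of short arcs under iteration of $T$, and finally read off the numerical bound from the Three lengths theorem applied simultaneously to $\pc^n$ and to the much finer partition $\pc^m$, where $m:=r_{k+3}+2r_k-3$. Since $(T^{-\ell}\pc)^{m}=T^{-\ell}\pc^{m}$ holds for trivial reasons, only the first equality carries content. Applying the rotation $T^{\ell}$ to every cell of $\rc$ produces a partition whose set of endpoints is $\{T^{-i}(0):i\in S\}$ for a finite $S\subseteq\{0,1,\dots,n\}$ with $0,n\in S$, and since $\bigl(T^{\ell}\rc\bigr)^k=T^{\ell}\bigl(\rc^k\bigr)$ it suffices to prove the theorem for $\ell=0$; I keep the notation $\rc$ and put $k^\ast:=m-n+1$, so that the claim becomes $\rc^{k^\ast}=\pc^{m}$. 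Now $T^{-j}\rc$ is rougher than $\pc^{j+n}$, hence $\rc^{k^\ast}$ is rougher than $\pc^{m}$; moreover the endpoint set of $\rc^{k^\ast}$ is $\{T^{-i}(0):i\in S+\{0,\dots,k^\ast-1\}\}$, which equals $\{T^{-i}(0):0\le i\le m\}$ because the largest gap of $S$ is at most $n$ while $n<k^\ast$ (a routine consequence of $n<r_k$ and the growth of the $r_i$). Thus $\rc^{k^\ast}$ and $\pc^{m}$ have the same endpoints, and the theorem reduces to showing that no cell of $\rc^{k^\ast}$ contains two of the $m+1$ intervals $J_0,\dots,J_m$ cut out by $\pc^{m}$. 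Two adjacent such intervals are already separated by $\rc^{k^\ast}$ (their common endpoint lies in the endpoint set just computed), so only non-adjacent pairs matter: for each non-adjacent pair $J_s\neq J_t$ one must exhibit $j\in\{0,\dots,k^\ast-1\}$ with $T^jJ_s$ and $T^jJ_t$ lying in different cells of $\rc$.

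I would set this up as follows. For $0\le j\le k^\ast-1$ the arc $T^jJ_s$ meets no endpoint of $\rc$ in its interior (as $S\subseteq\{0,\dots,n\}$ and $j\le m-n$), so $T^jJ_s$ lies in a single $\rc$-atom; tracking the left endpoint $x_j=T^{-(a_s-j)}(0)$ of $T^jJ_s$ and writing $d:=a_s-a_t$ for the preimage-of-zero offset between the two intervals, one checks that $x_j+d\alpha$ is the left endpoint of $T^jJ_t$, so the cell of $T^jJ_s$ in $\rc$ is a function of $x_j$ and that of $T^jJ_t$ a function of $x_j+d\alpha$. Hence $J_s,J_t$ stay merged in $\rc^{k^\ast}$ precisely when the $k^\ast$ consecutive points $\{x_0+j\alpha:0\le j<k^\ast\}$ of the $T$-orbit of $0$ all remain in the agreement set $B_d=\{z:\ \text{cell}(z)=\text{cell}(z+d\alpha)\}$ of the colouring of the $\rc$-atoms; the complement $B_d^{c}$ is a nonempty union of arcs (nonempty because an irrational rotation admits no non-constant invariant step function) whose endpoints lie in $\{T^{-i}(0):i\in S\cup(S+d)\}$, a set of at most $2|S|\le 2r_k$ points. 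Equivalently, via the Sturmian word $u$ coding the orbit of $0$ by $\pc$ and the sliding block code $\Phi$ with window $n$ and $v=\Phi(u)$, this is the assertion that the factor-reading map $L_m(u)\to L_{k^\ast}(v)$ is injective. So it remains to prove: for every $d$ with $1\le|d|\le m$ and every such colouring arising from a non-trivial $\rc$, the set $B_d^{c}$ contains an arc long enough that no segment of $k^\ast$ consecutive orbit points can avoid it.

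The third and main step makes this quantitative through the Three lengths theorem. The gaps of $\pc^n$ take at most three values, on the scale fixed by $r_{k-1}\le n<r_k$, while those of $\pc^m$ are much smaller, since $m+1$ lies between $r_{k+3}$ and $r_{k+4}$; thus $\pc^m$ is finer than $\pc^n$ by several continued-fraction levels, which keeps $\|n\alpha\|$ above the $\pc^m$-scale and forces a non-adjacent pair to have $\|d\alpha\|$ at least of the order of the smallest $\pc^m$-gap. These two inequalities bound the longest arc of $B_d^{c}$ from below; on the other hand, by the Three lengths theorem a segment of $k^\ast$ consecutive orbit points has gaps controlled by the same continued-fraction data, and I would compare the two decompositions through the recursions $q_i=c_iq_{i-1}+q_{i-2}$ and $r_i=q_i+q_{i-1}$, most likely by renormalising the system and inducting on the continued fraction, as in the proof of Theorem~\ref{thm:mainresult}. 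Carried through, this should show that $k^\ast=r_{k+3}+2r_k-n-2$ iterations always suffice, the three contributions being $r_{k+3}$ (every $\pc^n$-atom must be cut by a preimage of $0$, which occurs on the $\pc^m$-scale), one $r_k$ for each of the two arcs joining $J_s$ and $J_t$ (a full period of the coarse scale on each side), and $-n-2$ absorbing the window already spent in passing from $\rc$ to $\rc^{k^\ast}$ together with the standard additive corrections. The bound should be sharp for $\rc$ with the sparsest endpoint set, namely the rotated two-interval partition $S=\{0,n\}$, or one whose only non-singleton cell is a pair of nearly antipodal atoms, and for a pair $J_s,J_t$ whose two bounding arcs are as long as the Three lengths theorem permits on the $\pc^m$-scale.

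The routine parts are the normalisation and the mechanical three-distance estimates; the real difficulty is twofold. First, when $\rc$ has disconnected cells, the condition that $T^jJ_s$ and $T^jJ_t$ lie in a common cell of $\rc$ is genuinely stronger than the condition that some preimage of $0$ separates them, so one must control the orbit of the pair relative to both the endpoint set and the cell-grouping of $\rc$, and do so uniformly over every non-trivial Sturmian-measurable $\rc$ with the prescribed $\ell$ and $n$. Second, extracting the precise constant $r_{k+3}+2r_k-3$, rather than a crude $O(r_{k+3})$ bound, requires following the Three lengths decomposition carefully through the continued-fraction recursion and keeping the small additive corrections honest; this is where essentially all of the work lies. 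Once the identity is established for this single exponent, the remark following Theorem~\ref{thm:mainresult} yields it for every larger refinement automatically.
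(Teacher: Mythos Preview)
Your normalisation to $\ell=0$ and the verification that $\rc^{k^\ast}$ and $\pc^{m}$ share the same cut-index set are correct, and the reduction to separating every non-adjacent pair of $\pc^{m}$-intervals by some $T^{-j}\rc$ with $0\le j<k^\ast$ is a valid reformulation. But the argument stops at the decisive point: your third step offers only heuristics (``I would compare'', ``most likely by renormalising'', ``this should show'') for why $k^\ast$ iterates suffice, and you yourself concede that ``this is where essentially all of the work lies''. Bounding from below the longest arc of $B_d^{c}$ uniformly in $d$ and in the cell-grouping of $\rc$, and then matching that bound against the gap structure of $k^\ast$ consecutive orbit points so as to land on exactly $r_{k+3}+2r_k-3$ rather than some unspecified $O(r_{k+3})$, is not carried out; without it the theorem is not established.

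The paper proceeds quite differently and sidesteps any such uniform estimate. It first replaces $\rc$ by $T^{\ell}\bigl(\rc^{r_k-n}\bigr)$, reducing to the single normalised case $n=r_k-1$ with cut-indices $0$ and $r_k-1$ (Proposition~\ref{pro:rk-1}); the exponent in the theorem then drops out of the identity $(\rc^{a})^{b}=\rc^{a+b-1}$. For this normalised partition the argument is symbolic rather than geometric: the $\rc$-codes $u,v$ of the two Rokhlin towers forming $\pc^{r_k-1}$ satisfy $u_0\ne v_0$ and $u_{|u|-1}\ne v_{|v|-1}$ precisely because $0$ and $r_k-1$ are cut-indices, and iterating Lemma~\ref{lem:itercode} three times produces the tower code $z=w''w'$ of length $r_{k+3}$. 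The $\rc$-names $\widehat x$ of length $r_{k+3}$ are then distinguished level by level via the cyclic sets $Per(\widehat x)=\{j\in\ZZ_{r_{k+3}}:\widehat x_j=\widehat x_{j\ominus|w'|}\}$, which shows $\rc^{r_{k+3}}$ is finer than an explicit auxiliary partition $\rc'$; one further join with $T^{-(r_k-1)}\rc'$ then yields $\pc^{r_{k+3}+2r_k-3}$. This tower-code analysis delivers the exact constant directly, whereas your $B_d$ approach, even if it can be completed, would still have to recover that constant through a delicate estimate uniform over all admissible $\rc$ and all offsets $d$.
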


\begin{rmk}
  The numbers $\ell$ and $n$ in the statement of the last theorem are
  unambiguous.  We will see this fact at the end of Section
  \ref{sec:sturm-measurable}.
\end{rmk}

Let us notice that the theorem does not give the optimal answer when we
look for the minimal $m$ such that $\rc^m$ contains no disconnected
set. It is neither optimal when searching for the minimal $m$ such
that $\rc^m$ is of the form $T^{-i}\pc^j$ for $ i,j\in\NN$. Indeed, when
$\rc$ equals $\pc^n$ for some $n$, the optimal answer for both
problems is $m=1$, whereas the theorem suggests a number greater than
$r_{k+3}+r_k-1$.

In Section \ref{sec:subshifts}, we rephrase the results in terms of
symbolic dynamics (see Proposition \ref{pro:sturm-sliding}), namely in terms of injectivity of a sliding block code of finite length induced by a factor mapping from 
a
Sturmian subshift to another subshift. We show that for every non-trivial factor mapping from a Sturmian subshift, satisfying a mild technical assumption, the sliding block code of sufficiently large length induced by the mapping is injective. 
An immediate consequence of this result is the fact, that the factor mapping itself is injective.
Let us remark that the injectivity of the factor mapping can be proved by making slight changes in the proof of Cantor primarility of a two-sided Sturmian subshift given in (\cite{Du00}), see Proposition \ref{pro:Durand}. Nevertheless, we did not find a way to adapt this proof to pass from the injectivity of the factor mapping to the injectivity of the induced sliding block codes in the case of Sturmian subshift. 

In Examples \ref{exm:symbolic} and \ref{exm:symbolic_minimal} we show that, in general, the injectivity of a factor mapping from a subshift to another subshift does not imply the existence of an injective sliding block code of finite length.

The interpretation of the main result in the frame of symbolic dynamics invokes its own related problems and open questions, which are discussed in Section \ref{sec:open-problems}.   



\section{Preliminaries}
\label{sec:intro}

Let $X$ be a set and $T:X\to X$ be a mapping on it. A \emph{partition}
$\rc$ of the space $X$ is a set of non-empty pairwise disjoint sets
from $X$ such that they cover the whole set $X$, i.e., $X=\bigcup_{R
  \in \rc} R$. We say that a partition $\rc$ \emph{is finer than} a
partition $\rc'$ (or equivalently we say that $\rc'$ \emph{is rougher
  than} $\rc$) if every $R\in\rc$ is a subset of a set $R'\in\rc'$. In
other words, every set from $\rc'$ is a union of sets from $\rc$. This
relation, denoted by $\rc>\rc'$, forms a lattice structure on the set
of all partitions of $X$. The \emph{supremum} of two partitions is
denoted by $\vee$, it is also called the \emph{join}, and for two partitions
$\rc$ and $\rc'$ is defined as follows:
$$
\rc\vee\rc'=\{R\cap R' \mid R\in\rc,R'\in\rc'\}\setminus\{\emptyset\}.
$$
It is readily seen that $\rc^n=\bigvee^{n-1}_{i=0}T^{-i}\rc$ for
every partition $\rc$ and every $n\geq 1$.

Let a partition $\rc$ be labeled by indices forming a set $\Sigma$, i.e.,
$\rc=\{R_i \mid i\in \Sigma\}$. The \emph{labeling} of the partition
$\rc$ can be described by the mapping $\phi_\rc:X\to\Sigma$, where
$\phi_{\rc}(x)=i$ if $x\in R_i$. The sequence
$(\phi_{\rc}(T^ix))^n_{i=0}$ is called the \emph{$\rc$-name} of $x$ of
length $n$.  We get
\[
\rc^{n}=\left\{ R_{u} \mid
  u\in\Sigma^{n}\right\}\setminus\{\emptyset\} ,\qquad\text{where
}R_{u}=\bigcap_{k=0}^{n-1}T^{-k}R_{u_{k}}.
\]
In other words, the partition $\rc^n$ is the partition induced by the
$\rc$-names of the points from $X$ of length $n$, i.e., two points
from $X$ are in the same set from $\rc^n$ if and only if they have the
same $\rc$-name of length $n$.

For a given non-empty set $A \subset X$ we denote the restriction to
$A$ of a partition $\rc$ by $\rc|A$, i.e., $\rc|A=\{R\cap A \mid
R\in\rc\}\setminus \{ \emptyset \}$. For $i\leq j$, denote the
following family of sets:
$$\Lambda(A,i,j)=\{T^{-m}A \mid i\leq m<j\}.$$
If the sets in $\Lambda(A,i,j)$ are pairwise disjoint, we call
$\Lambda(A,i,j)$ a \emph{Rokhlin tower}(or simply a tower). The set
$T^{-i}A$ is called the \emph{base of the Rokhlin tower}, $T^{-(j-1)}A$ is
called the \emph{top} and $j-i$ is the \emph{height} of the Rokhlin
tower.  The set $T^{-k}A$, $i\leq k<j$, is referred to as $(k-i)$-th
\emph{level} of the tower.  We say that a word $u=u_0u_1\ldots
u_{j-i-1}\in\Sigma^{j-i}$ is the \emph{$\rc$-code of the tower} if
$$T^{-(j-1)+k}A\subset R_{u_k},\qquad \text{ for every } k \text{ such that } 0\leq k<j-i.$$
Hence, the $\rc$-code of the tower equals the $\rc$-name of length
$j-i$ of any point from the top of the tower.

Now, we introduce some notation which helps us to deal with
$\rc$-names of the points. For $n\in\NN$, a \emph{word} (or block) of
length $n$ over a finite set $\Sigma$ is any finite sequence
$u=u_0\ldots u_{n-1}$ of elements from $\Sigma$. The set of all words
of length $n$ is denoted by $\Sigma^n$, the length of $u$ is denoted
by $|u|$.
The set of all words of all lengths is denoted by $\Sigma^*$, i.e.,
$\Sigma^*=\bigcup_{n\in\NN}\Sigma^n$. For two words $u,v\in \Sigma^*$
we define their \emph{concatenation}, denoted simply by $uv$, as a word from
$\Sigma^{|u|+|v|}$ such that $(uv)_i=u_i$ if $i< |u|$, and
$(uv)_i=v_{i-|u|}$ if $|u|\leq i<|u|+|v|$. The concatenation of $k$
copies of a word $u$ is denoted by $u^k$. For $u\in \Sigma^*$,
$m,n\in\NN$, $m\leq n\leq |u|$, we denote by $u[m,n)$ the subword of
$u$ given by the interval $[m,n)$, i.e., $u[m,n)\in \Sigma^{n-m}$ and
$$
u[m,n)_i=u_{m+i},\qquad \text{for every } i \text{ such that } 0\leq i<m-n.
$$
The \emph{shift} mapping $S:\Sigma^*\to \Sigma^*$ is defined by
$S(u)=u[1,|u|)$ for all $u \in \Sigma^*$.

The shift mapping extends to infinite sequences in the following way.
Let $\Sigma^\NN$ be the product space of countably many copies of a
finite discrete space $\Sigma$.  The \emph{shift} mapping
$S:\Sigma^\NN\to\Sigma^\NN$ is defined by the following equality:
$(S(x))_i=x_{i+1}$ for $x = (x_i)_{i=0}^{+\infty} \in\Sigma^\NN$ and $i\in\NN$.
The mapping is continuous on $\Sigma^\NN$ and the pair
$(\Sigma^\NN,S)$ is a \emph{full shift}.  Given a closed $S$-invariant
subset $\Gamma\in\Sigma^\NN$ (i.e., $S(\Gamma)\subset \Gamma$), the
pair $(\Gamma,S)$ is a topological dynamical system called
\emph{subshift}, where $S$ is considered to be restricted to $\Gamma$.

A classical way to produce a subshift is the coding of an arbitrary
mapping $T: X\to X$ with respect to a finite partition of $X$.  Let
$\rc=\{R_i \mid i\in \Sigma\}$ denote a finite partition of $X$
labeled by $\Sigma$.  If $\Phi_\rc:X\to\Sigma^\NN$ is a mapping which
maps $x$ to its \emph{infinite $\rc$-name}, i.e.,
$\Phi_{\rc}(x)=(\phi_\rc(T^ix))^{+ \infty}_{i=0}$, then the mapping
$\Phi_\rc$ commutes with $T$ and $S$:
$$\Phi_\rc\circ T=S\circ\Phi_\rc.$$
In particular, the set $\Phi_\rc(X)$ and its closure are both
invariant under $S$.  Hence $(\overline{\Phi_\rc(X)},S )$ is a
subshift.

\subsection{Sturmian partition}
\label{sec:sturmian}

From now on, the mapping $T$ is as given in Introduction, i.e., it is
the rotation of the unit circle by an irrational angle
$\alpha$: $T(x)=(x+\alpha) \bmod{1}$ for all $x \in \RR/\ZZ$.  Also
recall that $\mathcal{P}$ is the partition consisting of two sets $P_0$ and $P_1$, where $P_{0}=[0,1-\alpha)$ and
$P_{1}=[1-\alpha,1)$.  Although both these objects depend on $\alpha$,
we do not explicitly state this dependence to ease the notation.

As already stated, the mapping $T$ and partition $\pc$ define a
Sturmian subshift $(\overline{\Phi_\pc([0,1))},S )$.  In this section,
we state some results concerning our interest: some specific
refinements of the partition $\pc$.

It is well-known that the partition $\pc^n$ consists of $n+1$
intervals. The Three lengths theorem (see \cite{So58}) says that these
intervals are of at most three lengths and specifies the lengths in
terms of convergents.  In particular, it is shown that the partition
$\pc^{r_k-1}$, for $k\in\NN$, has intervals of just two lengths. In
geometric proofs of the Three lengths theorem, not only the lengths of
intervals are determined, but also their endpoints.  A version of
the theorem, which is needed later, is recalled as Proposition
\ref{pro:twolength}. It is a special case of the description of
intervals from $\pc^n$ used in the proof of the Three lengths theorem
in \cite{Ku03-2} (Theorem 4.45, p. 160).

Before stating this version of the theorem, we need some more notations.
First, let us notice that the numbers $q_k\alpha-p_k$ for $k\in\NN$ form an
alternating sequence and their absolute values
$\eta_k=|q_k\alpha-p_k|$ satisfy the implicit formula
$\eta_k=\eta_{k-2}-c_k\eta_{k-1}$, $\eta_0=\alpha$, and
$\eta_1=1-c_1\alpha$.  Denote the following sequence of intervals in
$\RR/\ZZ$ as follows:
$$I_k = \begin{cases}
  [-\eta_k,0) & \mbox{for $k$ even,} \\ 
  [0, \eta_k ) & \mbox{for $k$ odd}.
  \end{cases}
$$

\begin{pro}\label{pro:twolength}
  Let $k\in\NN$. The partition $\pc^{r_k-1}$ consists of two Rokhlin
  towers $\Lambda(I_{k},0,q_{k-1})$ and $\Lambda(I_{k-1},0,q_k)$, i.e.
$$\pc^{r_k-1}=\Lambda(I_k,0,q_{k-1})\cup\Lambda(I_{k-1},0,q_k),\qquad \text{ for } k\geq 1.$$

Moreover,
$$T^{-(q_{k-1}+sq_k)}I_k\subset I_{k-1},\qquad \text{ for every } s \text{ such that } 0\leq
s< c_{k+1}.$$
\end{pro}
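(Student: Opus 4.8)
The plan is to prove both assertions of the proposition simultaneously by induction on $k$, the inductive step being a cutting-and-stacking argument. Throughout, recall that $\pc^{n}$ is the partition of $\RR/\ZZ$ into the $n+1$ arcs cut out by the points $T^{-i}(0)=-i\alpha\bmod 1$ for $0\le i\le n$, so that $\pc^{r_k-1}$ has exactly $r_k=q_k+q_{k-1}$ arcs and the whole question is the combinatorial pattern in which the preimages of $0$ cut the circle.

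For the base case $k=1$ one computes directly: $\pc^{r_1-1}=\pc^{c_1}$ has endpoints $T^{-i}(0)$ for $0\le i\le c_1$, which in circular order cut $\RR/\ZZ$ into one arc of length $\eta_1=1-c_1\alpha$ incident to $0$ (this is $I_1$) and $c_1=q_1$ arcs of length $\eta_0=\alpha$, which are precisely the $T^{-m}I_0$, $0\le m<q_1$; the inclusions $T^{-(q_0+sq_1)}I_1\subseteq I_0$ for $0\le s<c_2$ then follow by checking $(q_0+sq_1)\alpha\bmod 1=\eta_0-s\eta_1$ and using $\eta_0=c_2\eta_1+\eta_2$. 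For the inductive step, assume $\pc^{r_k-1}=\Lambda(I_k,0,q_{k-1})\cup\Lambda(I_{k-1},0,q_k)$ together with $T^{-(q_{k-1}+sq_k)}I_k\subseteq I_{k-1}$ for $0\le s<c_{k+1}$. Since $r_{k+1}-1=(r_k-1)+c_{k+1}q_k$, the partition $\pc^{r_{k+1}-1}=\pc^{r_k-1}\vee\bigvee_{i=r_k-1}^{\,r_{k+1}-2}T^{-i}\pc$ is obtained from $\pc^{r_k-1}$ by adjoining the $c_{k+1}q_k$ new orbit points $T^{-i}(0)$ with $r_k\le i<r_{k+1}$, and everything hinges on where these fall relative to the two towers at level $k$.

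Here is the heart of the matter. Because $I_k$ has endpoints $0$ and $T^{-q_k}(0)$, the arc $T^{-(q_{k-1}+sq_k)}I_k$ has endpoints $T^{-(q_{k-1}+sq_k)}(0)$ and $T^{-(q_{k-1}+(s+1)q_k)}(0)$, and by the inductive ``moreover'' clause it is a sub-arc of $I_{k-1}$ for $0\le s<c_{k+1}$; since $I_{k-1}$ has length $\eta_{k-1}=c_{k+1}\eta_k+\eta_{k+1}$ and each such sub-arc has length $\eta_k$, inspection of their positions shows they are consecutive inside $I_{k-1}$ and leave a single residual sub-arc of length $\eta_{k+1}$, which is precisely $I_{k+1}$. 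Pushing this subdivision of the base down the wide tower by $T^{-j}$, $0\le j<q_k$, shows that the new points $T^{-i}(0)$, $r_k\le i<r_{k+1}$, lie in the interiors of the levels of the wide tower (none in the narrow tower), cutting each level $T^{-j}I_{k-1}$ into $c_{k+1}$ sub-arcs of width $\eta_k$ and one of width $\eta_{k+1}$. Re-stacking: the $c_{k+1}$ width-$\eta_k$ sub-columns together with the untouched narrow tower $\Lambda(I_k,0,q_{k-1})$ fit together, via $\{T^{-m}I_k:0\le m<q_{k-1}\}\cup\{T^{-m}I_k:q_{k-1}\le m<q_{k-1}+c_{k+1}q_k\}=\{T^{-m}I_k:0\le m<q_{k+1}\}$, into $\Lambda(I_k,0,q_{k+1})$ (width $\eta_k$, height $q_{k+1}$), and the $q_k$ residual width-$\eta_{k+1}$ sub-columns fit together into $\Lambda(I_{k+1},0,q_k)$. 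Hence $\pc^{r_{k+1}-1}=\Lambda(I_{k+1},0,q_k)\cup\Lambda(I_k,0,q_{k+1})$, and reading the same picture one level higher (or, independently, from a closed form for $(q_k+sq_{k+1})\alpha\bmod 1$ together with $\eta_k=c_{k+2}\eta_{k+1}+\eta_{k+2}$) yields the ``moreover'' clause at level $k+1$. This closes the induction.

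The step I expect to be genuinely substantial is the localization of the new orbit points in the inductive step, since it is exactly the combinatorial content of the three-distance phenomenon; everything else is bookkeeping with the recursions $q_{k+1}=c_{k+1}q_k+q_{k-1}$, $r_{k+1}=q_{k+1}+q_k$, $\eta_{k+1}=\eta_{k-1}-c_{k+1}\eta_k$ and with the elementary fact that a finite family of pairwise disjoint half-open arcs of total length $1$ tiles $\RR/\ZZ$ and is then determined by its endpoint set. As the excerpt notes, the whole statement is a special case of the analysis of the arcs of $\pc^n$ in \cite{Ku03-2} (after \cite{So58}), so one may alternatively just quote it; that same elementary fact also gives a non-inductive route: check that $\Lambda(I_k,0,q_{k-1})\cup\Lambda(I_{k-1},0,q_k)$ has total length $q_{k-1}\eta_k+q_k\eta_{k-1}$, which the two recursions make telescope to $q_0\eta_1+q_1\eta_0=1$; that its arcs are pairwise disjoint (within each tower this is the best-approximation bound keeping $j\alpha$ at distance $\ge\eta_{k-1}\ge\eta_k$ from $\ZZ$ when $1\le j<q_k$, and across the two towers a short direct verification); and that its endpoint set, computed from the fact that $I_k$ has endpoints $0,T^{-q_k}(0)$ and $I_{k-1}$ has endpoints $0,T^{-q_{k-1}}(0)$, is exactly $\{T^{-i}(0):0\le i\le r_k-1\}$.
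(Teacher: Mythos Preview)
The paper does not supply its own proof of this proposition: it is stated as a recalled fact, explicitly introduced as ``a special case of the description of intervals from $\pc^n$ used in the proof of the Three lengths theorem in \cite{Ku03-2} (Theorem~4.45, p.~160),'' and no argument is given. Your proposal therefore goes beyond what the paper does---you provide a proof where the paper only provides a citation---and you note this yourself at the end. The inductive cutting-and-stacking argument you outline is exactly the standard route to this result (it is the Rauzy-induction picture for the irrational rotation: at each step one slices the wide tower into $c_{k+1}$ columns of width $\eta_k$ plus a residual column of width $\eta_{k+1}$, and restacks), and is in substance what one finds in the reference the paper cites. The alternative non-inductive route you sketch (total length $q_{k-1}\eta_k+q_k\eta_{k-1}=1$, disjointness via best-approximation, endpoint count) is also a known shortcut. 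Either fills the gap the paper deliberately leaves.
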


We also sometimes say that the towers $\Lambda(I_k,0,q_{k-1})$ and  $\Lambda(I_{k-1},0,q_k)$ \emph{form the partition} $\pc^{r_k-1}$.

We conclude this section with an iterative formula for $\rc$-codes of
the towers from the previous proposition.
\begin{lem}\label{lem:itercode}
  Let $k\in\NN$ and $\rc$ be a partition indexed by $\Sigma$. 
Let the $\rc$-codes of the towers $\Lambda(I_{k},0,q_{k-1})$ and $\Lambda(I_{k-1},0,q_k)$ exist
and let  $u\in\Sigma^{q_{k-1}}$ be the $\rc$-code of $\Lambda(I_{k},0,q_{k-1})$
 and $v\in\Sigma^{q_k}$ be the $\rc$-code of $\Lambda(I_{k-1},0,q_k)$.
Then $v$ and $v^{c_{k+1}}u$ are the $\rc$-codes of the towers $\Lambda(I_{k+1},0,q_k)$ and $\Lambda(I_{k},0,q_{k+1})$,
  respectively.
\end{lem}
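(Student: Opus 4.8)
The plan is to reduce everything to the defining property of the $\rc$-code recalled in the Preliminaries: a word $w\in\Sigma^h$ is the $\rc$-code of a tower $\Lambda(A,0,h)$ exactly when $T^{-(h-1)+m}A\subset R_{w_m}$ for $0\le m<h$, equivalently when $w$ is the $\rc$-name of length $h$ of a point in the top $T^{-(h-1)}A$. So the hypotheses read $T^{-(q_{k-1}-1)+m}I_k\subset R_{u_m}$ for $0\le m<q_{k-1}$ and $T^{-(q_k-1)+m}I_{k-1}\subset R_{v_m}$ for $0\le m<q_k$, and the two conclusions are the analogous level-by-level statements for the towers $\Lambda(I_{k+1},0,q_k)$ and $\Lambda(I_k,0,q_{k+1})$ that I must verify.

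A preliminary observation that both parts rest on is the inclusion $I_{k+1}\subset I_{k-1}$: from $\eta_{k+1}=\eta_{k-1}-c_{k+1}\eta_k$ one gets $0<\eta_{k+1}<\eta_{k-1}$, and $k+1$ and $k-1$ have the same parity, so the two intervals are of the same shape and share the endpoint $0$. The first conclusion is then immediate: $T^{-(q_k-1)+m}I_{k+1}\subset T^{-(q_k-1)+m}I_{k-1}\subset R_{v_m}$ for every $0\le m<q_k$, so $v$ is the $\rc$-code of $\Lambda(I_{k+1},0,q_k)$.

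For the second conclusion I would compute the level $T^{-(q_{k+1}-1)+m}I_k$ of $\Lambda(I_k,0,q_{k+1})$ for each $0\le m<q_{k+1}=c_{k+1}q_k+q_{k-1}$ and match it against the $m$-th letter of $v^{c_{k+1}}u$, which is $v_{m\bmod q_k}$ when $0\le m<c_{k+1}q_k$ and $u_{m-c_{k+1}q_k}$ when $c_{k+1}q_k\le m<q_{k+1}$. In the second range the shift exponent simplifies, $-(q_{k+1}-1)+m=-(q_{k-1}-1)+(m-c_{k+1}q_k)$ with $0\le m-c_{k+1}q_k<q_{k-1}$, so the required inclusion is precisely the hypothesis on $u$. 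In the first range I would write $m=sq_k+t$ with $0\le s<c_{k+1}$ and $0\le t<q_k$, use the arithmetic identity $q_{k+1}-1-m=q_{k-1}+(c_{k+1}-1-s)q_k+(q_k-1-t)$ together with $T^{-a-b}=T^{-a}T^{-b}$ to write $T^{-(q_{k+1}-1)+m}I_k=T^{-(q_k-1)+t}\,T^{-(q_{k-1}+(c_{k+1}-1-s)q_k)}I_k$, peel off $T^{-(q_{k-1}+(c_{k+1}-1-s)q_k)}I_k\subset I_{k-1}$ via Proposition \ref{pro:twolength} (applicable since $0\le c_{k+1}-1-s<c_{k+1}$), and conclude $T^{-(q_{k+1}-1)+m}I_k\subset T^{-(q_k-1)+t}I_{k-1}\subset R_{v_t}$ by the hypothesis on $v$. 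This proves $v^{c_{k+1}}u$ is the $\rc$-code of $\Lambda(I_k,0,q_{k+1})$.

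The only real obstacle is bookkeeping: keeping straight that the $\rc$-code lists levels from top to base, decomposing the index of the periodic part $v^{c_{k+1}}u$ correctly, and checking that the shifted index $c_{k+1}-1-s$ still lies in the range where Proposition \ref{pro:twolength} applies. Once the indices are lined up, each case is a one-line chain of inclusions, and the lemma is in essence a translation of Proposition \ref{pro:twolength} at levels $k$ and $k+1$ into the language of tower codes; indeed the two output towers are exactly those forming $\pc^{r_{k+1}-1}$.
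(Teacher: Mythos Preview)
Your proof is correct and is exactly the straightforward application of Proposition~\ref{pro:twolength} that the paper alludes to without spelling out; the paper gives no further details beyond that one sentence. Your bookkeeping is right: the inclusion $I_{k+1}\subset I_{k-1}$ handles the first tower immediately, and for the second tower your decomposition $m=sq_k+t$ together with the identity $q_{k+1}-1-m=(q_k-1-t)+(q_{k-1}+(c_{k+1}-1-s)q_k)$ and the range check $0\le c_{k+1}-1-s<c_{k+1}$ feeds directly into Proposition~\ref{pro:twolength}, so each level lands in the required $R_{v_t}$ or $R_{u_{m-c_{k+1}q_k}}$.
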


The proof of the lemma is a straightforward application of Proposition
\ref{pro:twolength}.

The previous lemma and proposition are illustrated in Figures
\ref{fig:two-towers}, \ref{fig:labeling-the-towers}, \ref{fig:scheme}
and \ref{fig:two-tower} (for $k$ even, for an odd $k$ the higher tower is on the left in our setting).
The first three figures depict the two towers $\Lambda(I_{k},0,q_{k-1})$ and $\Lambda(I_{k-1},0,q_{k})$, the first being always on the left.
 Figure \ref{fig:two-towers} shows the
partition $\pc^{r_k-1}$ as two towers. Therein and in what follows, we
use a compact notation $\lra{n} := T^{-n}(0)$. In Figure
\ref{fig:labeling-the-towers}, the $\rc$-codes of the towers are
graphically presented. A level is labeled by the symbol $a\in\Sigma$
if and only if it is included in $R_a$.
The word $u = u_0 \cdots u_{q_{k-1}-1}$ is the $\rc$-code of $\Lambda(I_{k},0,q_{k-1})$
and $v = v_0 \cdots v_{{q_k}-1}$ of $\Lambda(I_{k-1},0,q_{k})$
Figure \ref{fig:scheme} shows the
dynamics given by $T$. The arrows indicate that each level, except the
base, is mapped to the level below. In accordance with the dynamics,
the $\rc$-codes are written in the towers from the top to the
base. The arrows also show where the points from the base
are mapped by $T$. By Proposition \ref{pro:twolength} the right part of
the base  of the right tower of length $\eta_k$ is mapped to the top of the left
tower. The rest of the base and the base of the left tower must be mapped to the top of the right
tower due to the injectivity of the mapping $T$. 
Figure \ref{fig:two-tower} illustrates two consecutive iterations of $\rc$-codes given by the last lemma. (The figure is for $k$ odd.)

  \begin{figure}[h]
    \centering

\begin{tikzpicture}[xscale=1,yscale=0.8]
  \pgfmathsetmacro{\s}{2} 
  \pgfmathsetmacro{\t}{7} 
  \pgfmathsetmacro{\r}{0.7} 
  \pgfmathsetmacro{\m}{0.1} 


  \draw[|-|] (0,0) node[below] {$\lra{q_k}$} -- node[below] {$I_k$}
  (\s,0) node[below] {$0$}; 

  \draw[|-|] (0,1) node[below] {$\lra{q_k+1}$} -- (\s,1) node[below]
  {$\lra{1}$};

  \draw[dotted] (\s/2,1.5) -- (\s/2,2.5);

  \draw[|-|] (0,3) node[below] {$\lra{r_k-1}$} -- (\s,3) node[below]
  {$\lra{q_{k-1}-1}$};


  \draw[|-|] (\s,0) -- node[below] {$I_{k-1}$} (\s+\t,0) node[below]
  {$\lra{q_{k-1}}$}; \draw[|-|] (\s,1) -- (\s+\t,1) node[below]
  {$\lra{q_{k-1}+1}$};

  \draw[dotted] (\s+\t/2,1.5) -- (\s+\t/2,2.5);

  \draw[|-|] (\s,3) -- (\s+\t,3) node[below] {$\lra{2q_{k-1}-1}$};


  \draw[|-|] (\s,4) node[below] {$\lra{q_{k-1}}$} -- (\s+\t,4)
  node[below] {$\lra{2q_{k-1}}$};

  \draw[dotted] (\s+\t/2,4.5) -- (\s+\t/2,8.5);

  \draw[|-|] (\s,9) node[below] {$\lra{q_{k}-1}$} -- (\s+\t,9)
  node[below] {$\lra{r_k-1}$};

\end{tikzpicture}
\caption{Rokhlin towers $\Lambda(I_k,0,q_{k-1})$ and
  $\Lambda(I_{k-1},0,q_{k})$ forming the partition
  $\pc^{r_k-1}$. (The figure is for $k$ even.)} 
\label{fig:two-towers}
\end{figure}


  \begin{figure}[h]
    \centering
    \begin{tikzpicture}[xscale=1,yscale=0.7]
      \pgfmathsetmacro{\s}{2} 
      \pgfmathsetmacro{\t}{7} 

      \pgfmathsetmacro{\r}{0.7} 
      \pgfmathsetmacro{\m}{0.1} 


      \draw[|-|] (0,0) -- node[above] {$u_{q_{k-1}-1}$} (\s,0)
      node[below] {$0$}; \draw[|-|] (0,1) -- node[above]
      (endofdottedline1) {$u_{q_{k-1}-2}$} (\s,1); \draw[|-|] (0,3) --
      node[above] (u0) {$u_0$} node[below] (startofdottedline1) {}
      (\s,3); \draw[-,dotted] (startofdottedline1) to
      (endofdottedline1);


      \draw[|-|] (\s,0) -- node[above] {$v_{q_k-1}$} (\s+\t,0);
      \draw[|-|] (\s,1) -- node[above] (endofdottedline2)
      {$v_{q_k-2}$} (\s+\t,1); \draw[|-|] (\s,3) -- node[above]
      {$v_{q_k-q_{k-1}}$} node[below] (startofdottedline2) {}
      (\s+\t,3); \draw[-,dotted] (startofdottedline2) to
      (endofdottedline2);


      \draw[|-|] (\s,4) -- node[above] (endofdottedline3)
      {$v_{q_k-q_{k-1}-1}$} (\s+\t,4);

      \draw[|-|] (\s,7) -- node[above] {$v_0$} node[below]
      (startofdottedline3) {} (\s+\t,7);

      \draw[-,dotted] (startofdottedline3) to (endofdottedline3);

    \end{tikzpicture}
    \caption{$\rc$-codes of the towers forming $\pc^{r_k-1}$, word $u = u_0 \cdots u_{q_{k-1}-1}$ is the $\rc$-code of $\Lambda(I_k,0,q_{k-1})$, word $v = v_0 \cdots v_{{q_k}-1}$ is the $\rc$-code of $\Lambda(I_{k-1},0,q_{k})$. (The figure is for $k$ even.)}
    \label{fig:labeling-the-towers}
  \end{figure}

\begin{figure}
  \centering
  \begin{tikzpicture}

    \pgfmathsetmacro{\WL}{2} 
    \pgfmathsetmacro{\WR}{7} 
    \pgfmathsetmacro{\HL}{2} 
    \pgfmathsetmacro{\HR}{5} 

\node (recL) [draw,rectangle,minimum width=\WL cm,minimum
    height=\HL cm,outer sep=0pt] {}; 
\draw[->]
    ($(recL.north)!0.25!(recL.south)$) to node[right] {$u$}
    ($(recL.north)!0.75!(recL.south)$); \node (recR) at (recL.south
    east) [anchor=south west,draw,rectangle,minimum width=\WR
    cm,minimum height=\HR cm,outer sep=0pt] {}; 
\draw[->]
    ($(recR.north)!0.25!(recR.south)$) to node[right] {$v$}
     ($(recR.north)!0.75!(recR.south)$);

    \node at (recL.south west) [below] {$\lra{q_{k}}$};
    \node at (recL.north west) [above] {$\lra{r_{k}-1}$};
    \node at (recR.north west) [above] {$\lra{q_{k}-1}$}; \node at
    (recR.north east) [above] {$\lra{r_{k}-1}$}; \node at (recR.south
    east) [below] {$\lra{q_{k-1}}$};

    \coordinate (rk) at ($(recR.south east)-(\WL,0)$); \node at (rk)
    [anchor=north] {$\lra{r_k}$};

    \draw[|-|,very thick] (recL.south west) to node (startpoint) {}
    (rk); \node (endpoint) at (recR.north) {}; \draw
    [->,out=-90,in=90] (startpoint) to node[pos=0.6,above left] {$T$} (endpoint);

    \node (startpoint) at ($(recR.south east)!0.5!(rk)$) {}; \node
    (endpoint) at (recL.north) {}; \draw [->,out=-90,in=90]
    (startpoint) to node[pos=0.7,above right] {$T$} (endpoint);




    


  \end{tikzpicture}

  \caption{Dynamics (given by $T$) on the towers $\Lambda(I_k,0,q_{k-1})$ and  $\Lambda(I_{k-1},0,q_{k})$. (The figure is for $k$ even.)}
  \label{fig:scheme}
\end{figure}

\begin{figure}[h]
  \centering \subfloat[$\rc$-codes of the towers $\Lambda(I_{k},0,q_{k+1})$ and $\Lambda(I_{k+1},0,q_{k})$ forming $\pc^{r_{k+1}-1}$.] {
    \begin{tikzpicture}
      \node (recL1) [draw,rectangle,minimum width=3cm,minimum
      height=1cm,outer sep=0pt] {$\downarrow u$}; \node (recR1) at
      (recL1.south east) [anchor=south west,draw,rectangle,minimum
      width=2cm,minimum height=2.5cm,outer sep=0pt] {$\downarrow v$};
      \node (recL2) at (recL1.north east) [anchor=south
      east,draw,rectangle,minimum width=3cm,minimum height=5cm,outer
      sep=0pt] {$\downarrow v^{a_{k+1}}$};

    \end{tikzpicture}
    \label{fig:k+1 two-tower}
  } \hspace{2ex} \subfloat[$\rc$-codes of the two towers $\Lambda(I_{k+2},0,q_{k+1})$ and $\Lambda(I_{k+1},0,q_{k+2})$ forming $\pc^{r_{k+2}-1}$.] {
    \begin{tikzpicture}
      \node (recL1) [draw,rectangle,minimum width=2cm,minimum
      height=2cm,outer sep=0pt] {$\downarrow v^{a_{k+1}}u$}; \node
      (recR1) at (recL1.south east) [anchor=south
      west,draw,rectangle,minimum width=3cm,minimum height=0.5cm,outer
      sep=0pt] {$\downarrow v$}; \node (recL2) at (recR1.north east)
      [anchor=south east,draw,rectangle,minimum width=3cm,minimum
      height=5cm,outer sep=0pt] {$\downarrow
        (v^{a_{k+1}}u)^{a_{k+2}}$};

    \end{tikzpicture}

    \label{fig:k+2 two-tower}
  }
\caption{ }
\label{fig:two-tower}
\end{figure}


\subsection{Sturmian-measurable partitions}
\label{sec:sturm-measurable}
Let us recall that a partition of $[0,1)$ is Sturmian-measurable if
it is a finite partition whose elements are finite unions of
right-closed left-open intervals with endpoints from the set of
preimages of zero $T^{-i}(0)$, $i\in\NN$.  The class of all
Sturmian-measurable partitions is closed under taking preimages and joins. In
particular, for all $m\in\NN$ the partition $\rc^m$ is Sturmian-measurable whenever $\rc$ is
Sturmian-measurable.

For a partition $\rc$ we define the set $\partial\rc$ as the union of
the boundaries of the sets from $\rc$. In this definition we consider the
topology of $\RR/\ZZ$ represented by the fundamental domain $[0,1)$. The elements of $\partial\rc$ are
\emph{cutpoints} of the partition $\rc$.  Put
$$\cc(\rc)=\{i\in\ZZ \mid T^{-i}(0) \in\partial\rc\}.$$ 
The numbers from this set are called {\em cut-indices} of $\rc$. The
terminology follows the fact that for Sturmian-measurable partition
$\rc$, the set $\partial\rc$ is the smallest set such that the sets in $\rc$
can be described as a finite union of intervals whose endpoints
belong to $\partial\rc$. Hence, the partition $\rc$ cuts the circle
just at the points from $\partial\rc$.

The cutpoints and cut-indices interplay with the lattice
operations and the transformation $T$ in the following manner:
\begin{align*}
  \partial(\rc\vee\rc')&=\partial\rc \cup \partial\rc',&\partial(T^{-j}\rc)&=T^{-j}(\partial\rc),\\
  \cc(\rc\vee\rc')&=\cc(\rc) \cup
  \cc(\rc'),&\cc(T^{-j}\rc)&=j+\cc(\rc),
\end{align*}
for every integer $j$ and partitions $\rc$ and $\rc'$. In particular,
\begin{align*}
  \min(\cc(T^{-i}(\rc^j))&=i+\min(\cc(\rc)),\\
  \max(\cc(T^{-i}(\rc^j))&=i+j-1+\max(\cc(\rc)),
\end{align*}
for every $i\in\ZZ$ and $j\in\NN$.

Let us reformulate the assumption of Theorem \ref{thm:precise} using
cut-indices. Given a Sturmian-measurable partition $\rc$, integers $s,m\in\NN$,
the partition $T^{-s}\pc^m$ is finer than $\rc$ if and only if the
cut-indices of $\rc$ belong to the interval $[s,s+m]$. If $\rc$ is
non-trivial, the smallest interval containing all the cut-indices is
the interval $[\min(\cc(\rc)),\max(\cc(\rc))]$. Hence, among all pairs
$(s,m)$ such that $T^{-s}\pc^m$ is finer than $\rc$, there is a pair
$$(s',m')=(\min(\cc(\rc)),\max(\cc(\rc))-\min(\cc(\rc))),$$
which maximizes the first coordinate and minimizes the second
simultaneously. Hence the largest $\ell$ and the least $n$ introduced
in Theorem \ref{thm:precise}  are uniquely determined by the partition $\rc$. The number $\ell$ equals the minimal cut-index of $\rc$ and $n$ equals the difference between the maximal and the minimal cut-index of $\rc$.

\section{Proof of the main result}
\label{sec:proof}
In this section we prove Theorem \ref{thm:precise} by applying Proposition \ref{pro:rk-1} which is stated below.
In the second part we prove the proposition itself using the analysis of a periodic structure in $\rc$-codes. Let us notice, that the proposition is a special case of Theorem \ref{thm:precise}.

\begin{pro}\label{pro:rk-1}
  Let $\rc$ be rougher than $\pc^{r_k-1}$ for some $k\in\NN$.
If $0$
  and $r_k-1$ are cut-indices of $\rc$, then
$$\rc^{r_{k+3}+r_k-1}=\pc^{r_{k+3}+2r_k-3}.$$
\end{pro}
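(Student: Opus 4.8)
We outline the argument; filling in the details requires a fairly long analysis of periodic blocks inside $\rc$-codes.

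Since $\rc$ is rougher than $\pc^{r_k-1}=\Lambda(I_k,0,q_{k-1})\cup\Lambda(I_{k-1},0,q_k)$ (Proposition~\ref{pro:twolength}), $\rc$ is encoded by two words: let $u\in\Sigma^{q_{k-1}}$ be the $\rc$-code of $\Lambda(I_k,0,q_{k-1})$ and $v\in\Sigma^{q_k}$ that of $\Lambda(I_{k-1},0,q_k)$. By Proposition~\ref{pro:twolength} (compare Figure~\ref{fig:two-towers}) the two atoms of $\pc^{r_k-1}$ adjacent to the cutpoint $\lra 0$ are the bases of the two towers and those adjacent to $\lra{r_k-1}$ are their tops; since the $\rc$-code of a tower is written from the top to the base, $0\in\cc(\rc)$ forces $u$ and $v$ to end with different letters and $r_k-1\in\cc(\rc)$ forces them to begin with different letters. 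Call this property $(\star)$.

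Applying Lemma~\ref{lem:itercode} three times (at levels $k$, $k+1$, $k+2$) we obtain $\pc^{r_{k+3}-1}=\Lambda(I_{k+3},0,q_{k+2})\cup\Lambda(I_{k+2},0,q_{k+3})$, where, writing $w=v^{c_{k+1}}u$, the $\rc$-code of $\Lambda(I_{k+3},0,q_{k+2})$ is $A=w^{c_{k+2}}v$ and that of $\Lambda(I_{k+2},0,q_{k+3})$ is $B=(w^{c_{k+2}}v)^{c_{k+3}}w$. Thus $|A|=q_{k+2}$, $|B|=q_{k+3}$, $B$ is a prefix of $A^{\infty}$ (so $A[i]=B[i]$ for $i<q_{k+2}$ and $B[i]=B[i+q_{k+2}]$ whenever $i+q_{k+2}<q_{k+3}$), both $A$ and $B$ begin with the first letter of $v$, $A$ ends with the last letter of $v$, and $B$ ends with the last letter of $u$. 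By $(\star)$ the last letters of $A$ and $B$ differ, and so do the letters $u_0$ and $v_0$ sitting at the seam of $w=v^{c_{k+1}}u$.

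Since $\rc$ is rougher than $\pc^{r_k-1}$, the refinement $\rc^m$ is rougher than $(\pc^{r_k-1})^m=\pc^{r_k+m-2}$, which for $m=r_{k+3}+r_k-1$ equals $\pc^{r_{k+3}+2r_k-3}=(\pc^{r_{k+3}-1})^{2r_k-1}$. It therefore suffices to show that two points $x,x'$ with distinct $\pc^{r_{k+3}-1}$-names of length $2r_k-1$ have distinct $\rc$-names of length $m$. Each of $x,x'$ lies in a level of one of the two towers of $\pc^{r_{k+3}-1}$; under $T$ it descends that level to the base of its tower and then jumps to the top of the next tower: the base of $\Lambda(I_{k+3},0,q_{k+2})$ goes onto a sub-arc of the top of $\Lambda(I_{k+2},0,q_{k+3})$, while the base of $\Lambda(I_{k+2},0,q_{k+3})$ splits, a sub-arc of it going to the top of $\Lambda(I_{k+3},0,q_{k+2})$ (Proposition~\ref{pro:twolength}, compare Figure~\ref{fig:scheme}). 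Because codes run from the top, at the base of $\Lambda(I_{k+3},0,q_{k+2})$ the orbit reads the last letter of $A$ and at the base of $\Lambda(I_{k+2},0,q_{k+3})$ it reads the last letter of $B$, and these differ by $(\star)$.

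One now tracks the $\rc$-names of $x$ and $x'$ simultaneously. Using $A[i]=B[i]$ for $i<q_{k+2}$ and the period $q_{k+2}$ of $B$, the two names agree as long as the two orbits stay in step, that is, sit at corresponding levels relative to the tower bases. Once the $\pc^{r_{k+3}-1}$-names of $x$ and $x'$ diverge, the orbits are positioned differently with respect to some base, and this surfaces in the $\rc$-name: a mismatch occurring at a base is seen at once through $A[q_{k+2}-1]\neq B[q_{k+3}-1]$, while a mismatch that at first lies inside a periodic block of $A$ or of $B$ becomes visible within a bounded stretch through the seam letters $u_0\neq v_0$ of $w=v^{c_{k+1}}u$ (or the last-letter pair of $(\star)$). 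The main obstacle is the bookkeeping that for every pair of admissible initial levels this decisive step falls inside the window $\{0,\dots,m-1\}$: following the orbits through at most one full circuit of the two towers (of length $q_{k+2}+q_{k+3}=r_{k+3}$) and the further stretch needed to align them with the seams of the blocks shows that $m=r_{k+3}+r_k-1$ always suffices, and this is also why the renormalisation has to be carried out exactly three steps rather than one or two. The numerical inequalities needed (such as $q_{k+1}\ge r_k$ and the recursion $q_j=c_jq_{j-1}+q_{j-2}$) are routine.
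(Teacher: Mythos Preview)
Your setup is correct: the words $u,v$, the iterated codes $w=v^{c_{k+1}}u$, $A=w^{c_{k+2}}v$, $B=A^{c_{k+3}}w$, the property $(\star)$, and the easy direction $\rc^{r_{k+3}+r_k-1}<\pc^{r_{k+3}+2r_k-3}$ are all fine, and your reduction to ``distinct atoms of $\pc^{r_{k+3}+2r_k-3}$ carry distinct $\rc$-names of length $r_{k+3}+r_k-1$'' is the right target.

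The gap is that the ``bookkeeping'' you postpone is the entire content of the proof. Your heuristic---mismatches at a base are visible through the last letters of $A,B$, while mismatches inside a periodic block surface at the seam $u_0\neq v_0$---never becomes an argument: you do not say which pair of letters witnesses the discrepancy for which pair of initial levels, nor why the window $r_{k+3}+r_k-1$ is long enough in every case. In particular, consider the region $\widetilde E=\bigcup_{0\le j<r_k-1}T^{-j}J$ with $J=I_{k+2}\setminus T^{-q_{k+2}}I_{k+3}$: two points in different levels of $\widetilde E$ lie in different atoms of $\pc^{r_{k+3}+r_k-2}$, yet nothing in your sketch distinguishes their $\rc$-names of length $r_{k+3}$, and you give no mechanism explaining how the additional $r_k-1$ steps resolve this, or why exactly $r_k-1$ extra steps suffice rather than more.

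The paper avoids a brute case analysis by a two-stage argument. First, for each length-$r_{k+3}$ word $\widehat x$ it introduces the cyclic set $Per(\widehat x)=\{j\in\ZZ_{r_{k+3}}:\widehat x_j=\widehat x_{j\ominus q_{k+2}}\}$; it shows that for points in the tall tower over $I_{k+3}$ the name $\widehat x$ is a cyclic rotation of $z=BA$, so $Per(\widehat x)$ is a translate of $Per(z)$, while for points in $\widetilde E$ the complement of $Per(\widehat x)$ has strictly larger diameter. This invariant is enough to prove that $\rc^{r_{k+3}}$ refines an explicit intermediate partition $\rc'$ which coincides with $\pc^{r_{k+3}+r_k-2}$ except that the whole block $\widetilde E$ remains unsplit. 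Second, one observes that $T^{-(r_k-1)}\widetilde E$ falls into the part of the picture where $\rc'$ already separates every level, hence
\[
\rc^{r_{k+3}+r_k-1}\;>\;\rc^{r_{k+3}}\vee T^{-(r_k-1)}\rc^{r_{k+3}}\;>\;\rc'\vee T^{-(r_k-1)}\rc'\;>\;\pc^{r_{k+3}+2r_k-3}.
\]
The $Per$ invariant is what makes the level-separation tractable; a direct orbit comparison along the lines you sketch would have to reproduce this case analysis explicitly, and your proposal does not.
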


\begin{proof}[Proof of Theorem \ref{thm:precise}]
  Let $\ell,n\in\NN$ and a partition $\rc$ satisfy the assumptions of
  Theorem \ref{thm:precise}, i.e., $\ell=\min(\cc(\rc))$,
  $n=\max(\cc(\rc))-\min(\cc(\rc))$, and $k\in\NN$ such that
  $r_{k-1}\leq n<r_k$.  Denote $\rc'=T^{\ell}(\rc^{r_k-n})$. Thus,
  since $\rc$ is finer than $T^{\ell}\pc^m$, we get
$$\rc'<T^{\ell}\left(\left(T^{-\ell}\pc^n\right)^{r_k-n}\right)=T^{\ell}\left(T^{-\ell}\left(\pc^n\right)^{r_k-n}\right)=\pc^{r_k-1}.$$
In particular, $\rc'$ is Sturmian-measurable.

By the properties of cut-indices, mentioned in the previous section,
\begin{align*}
  \min(\cc(\rc'))&=-\ell+\min(\cc(\rc))=0,\\
  \max(\cc(\rc'))&=-\ell+r_k-n-1+\max(\cc(\rc))=-\ell+r_k-n-1+\ell+n=r_k-1.
\end{align*}
Thus, the partition $\rc'$ satisfies the assumptions of Proposition
\ref{pro:rk-1}. Applying the proposition we get
\begin{align*}
  T^{-\ell}\left(\pc^{r_{k+3}+2r_k-3}\right)&=T^{-\ell}\left((\rc')^{r_{k+3}+r_k-1}\right)= T^{-\ell}\left( \left(T^\ell\rc^{r_k-n}\right)^{r_{k+3}+r_k-1}\right)\\
  &=\rc^{r_{k+3}+2r_k-n-2}.
\end{align*}
 To complete the proof we need to prove Proposition \ref{pro:rk-1}.
\end{proof}

\subsection{Proof of Proposition \ref{pro:rk-1}}
\label{sec:proof-pro-rk-1}

In the rest of this section we fix $k\in\NN$ and a partition
$\rc=\{R_a,a\in\Sigma\}$ satisfying the assumptions of Proposition
\ref{pro:rk-1}, i.e., $k\geq 1$, $\rc$ is rougher than $\pc^{r_k-1}$
and the indices $0$ and $r_k-1$ are cut-indices of $\rc$.

Let us denote by $u$ and $v$ the $\rc$-codes of the towers
$\Lambda(I_k,0,q_{k-1})$ and $\Lambda(I_{k-1},0,q_{k})$ respectively
(see Figure \ref{fig:labeling-the-towers}).
The condition on the cut-indices of $\rc$ can be rephrased into the
following conditions on $u$ and $v$:
\begin{itemize}
\item since the cutpoint $0$ of $\rc$ is the common endpoint of the
  bases of the towers, the beginning of $u$ and $v$ differs,
  i.e., $u_0\neq v_0$;
\item since the cutpoint $T^{-(r_k-1)}0$ is the common endpoint of the
  tops of the towers, the end of $u$ and $v$ differs, i.e.,
  $u_{q_{k-1}-1}\neq v_{q_k-1}$.
\end{itemize}

Put
$$w=v^{c_{k+1}}u,\qquad  w'=(w)^{c_{k+2}}v,\qquad w''=(w')^{c_{k+3}}w,\qquad z=w''w'.$$ 
Applying Lemma \ref{lem:itercode} three times we get that $w$, $w'$
and $w''$ are the $\rc$-codes of the towers
$\Lambda(I_{k},0,q_{k+1})$, $\Lambda(I_{k+1},0,q_{k+2})$ and
$\Lambda(I_{k+2},0,q_{k+3})$, respectively. It implies that the words
$w$ and $w'$ are also the $\rc$-codes of the towers
$\Lambda(I_{k+2},0,q_{k+1})$ and $\Lambda(I_{k+3},0,q_{k+2})$. We have
just described the $\rc$-codes of the towers which form the partitions
$\pc^{r_{k+1}-1}$, $\pc^{r_{k+2}-1}$ and $\pc^{r_{k+3}-1}$. Our aim is
to use these $\rc$-codes to describe $\rc$-names of length $r_{k+3}$ of
the points from distinct sets from $\pc^{r_{k+3}+r_{k+1}-2}$. The key
role throughout this section will be played by the word $z$, which is
the $\rc$-code of the tower $\Lambda(I_{k+3},0,r_{k+3})$.

\begin{figure}[h]
  \begin{tikzpicture}


    \pgfmathsetmacro{\WL}{2} 
    \pgfmathsetmacro{\WR}{7} 
    \pgfmathsetmacro{\HL}{2} 
    \pgfmathsetmacro{\HB}{0.8} 
    \pgfmathsetmacro{\HR}{5} 

    \node (A) [draw,rectangle,minimum width=\WL cm,minimum height=\HL
    cm,outer sep=0pt] {$A$}; \node (leftm) at (A.west) [anchor=east]
    {$\downarrow w'$}; \node (recR) at (A.south east) [anchor=south
    west,draw,rectangle,minimum width=\WR cm,minimum height=\HR
    cm,outer sep=0pt,label=right:$w'' \downarrow$] {};
    \node (B) at (recR.south east) [anchor=south east, rectangle,
    minimum width=\WL cm, minimum height=\HB cm, outer sep=0pt] {$B$};
    \pgfmathsetmacro{\HC}{\HR-\HB} \node (C) at (recR.north east)
    [anchor=north east,rectangle,minimum width=\WL cm,minimum
    height=\HC cm,outer sep=0pt] {$C$}; \pgfmathsetmacro{\WD}{\WR-\WL}
    \node (D) at (recR.north west) [anchor=north
    west,rectangle,minimum width=\WD cm,minimum height=\HC cm,outer
    sep=0pt] {$D$}; \node (E) at (recR.south west) [anchor=south
    west,rectangle,minimum width=\WD cm,minimum height=\HB cm,outer
    sep=0pt
    ] {$E$};

    \draw[dashed] (C.south east) -- (C.south west) -- (C.north west);
    \draw [thick] (E.south east) to (E.north east); \draw (E.north
    east) -- (E.north west);

    \node at (A.south west) [below] {$\lra{q_{k+3}}$}; \node at
    (A.south east) [below] {$0$}; \node at (B.south west) [below]
    {$\lra{r_{k+3}}$}; \node at (B.south east) [below]
    {$\lra{q_{k+2}}$}; \node at (A.north west) [above]
    {$\lra{r_{k+3}-1}$}; \node at (D.north west) [above]
    {$\lra{q_{k+3}-1}$}; \node (lastcutpoint) at (B.north west) {};
    \node at (C.north east) [above] {$\lra{r_{k+3}-1}$};

    \node (lastcutpoint) at (B.north west) {}; \node
    (labeloflastcutpoint) at ($(B.north east)+(0.5,0.5)$) [right]
    {$\lra{r_{k+3}+r_k-2}$}; \draw [->,out=180,in=45]
    (labeloflastcutpoint) to (lastcutpoint);
  \end{tikzpicture}

  \caption{Important parts of $\pc^{r_{k+3}+r_k-2}$. (The figure is for $k$ even.)}
  \label{fig:rc_k+3}
\end{figure}

We divide the interval $[0, 1)$ into several Rokhlin towers (see Figure
\ref{fig:rc_k+3}) and separately analyze the $\rc$-names of length $r_{k+3}$ of
the points from distinct towers. 

Put
\begin{align*}
  A=&\Lambda(I,0,q_{k+2}),& B=&\Lambda(K,0,r_k-1),&C=&\Lambda(K,r_k-1,q_{k+3}),\\
  D=&\Lambda(J,r_k-1,q_{k+3}),&E=&\Lambda(J,0,r_k-1),&&
\end{align*}
where $I=I_{k+3}$, $J=I_{k+2} \setminus T^{-q_{k+2}}I_{k+3}$ and
$K=T^{-q_{k+2}}I_{k+3}$. The intervals $I$, $J$ and $K$ are the bases
of the towers $A$, $E$ and $B$, respectively. For any of the towers
$A, B, C, D$ and $E$, denote its union using the tilde over the
letter, e.g., $\widetilde A=\bigcup A$.

According to the Three lengths theorem the partition
$\pc^{r_{k+3}+r_k-2}$ arises from the partition $\pc^{r_{k+3}-1}$ by
adding new cutpoints
$$\lra{j},\quad \text{ for } j \text{ such that } r_{k+3}\leq j\leq r_{k+3}+r_k-2.$$

These points are illustrated by the vertical line between the towers
$B$ and $E$ in Figure \ref{fig:rc_k+3}. It implies that
$$\pc^{r_{k+3}+r_k-2}=A\cup B\cup E\cup\Lambda(I_{k+3},r_k-1,q_{k+3}).$$

For a point $x\in [0,1)$, denote the $\rc$-name of $x$ of length
$r_{k+3}$ by ${\widehat x}$. In addition, denote the addition and
subtraction in the finite modular group $\ZZ_{r_{k+3}}$ by $\oplus$
and $\ominus$. For a word $u\in\Sigma^{r_{k+3}}$, put
$$Per(u)=\{j\in\ZZ_{r_{k+3}} \mid u_j=u_{j\ominus |w'|} \},\qquad \sigma(u)=u_{r_{k+3}-1}u_0u_1 \ldots u_{r_{k+3}-2}u_0\in\Sigma^{r_{k+3}}.$$
Obviously, $Per(\sigma(u))=Per(u)\oplus 1$.

To find $Per(z)$ we need to compare $z$ and $\sigma^{|w'|}(z)$,
\begin{align*}
  z&= w''w'=(w')^{c_{k+3}}\ ww'=(w')^{c_{k+3}}\
  ww^{c_{k+2}}v=(w')^{c_{k+3}}\ w^{c_{k+2}}wv\\
  &=(w')^{c_{k+3}}\
  w^{c_{k+2}}v^{c_{k+1}}uv,\\
  \sigma^{|w'|}(z)&=w'w''=w'(w')^{c_{k+3}}w=(w')^{c_{k+3}}w'w=(w')^{c_{k+3}}\ w^{c_{k+2}}vw\\
  &=(w')^{c_{k+3}}\ w^{c_{k+2}}vv^{c_{k+1}}u=(w')^{c_{k+3}}\
  w^{c_{k+2}}v^{c_{k+1}}vu.
\end{align*}
One can see that the words coincide on first $|z|-|v|-|u|$
positions. Since the beginnings and ends of $u$ and $v$ differ, we get
that the words above differ at positions $|z|-1$ and
$|z|-|v|-|u|$. Thus,
$$ \NN \cap \left[0,|z|-|v|-|u|\right)\quad \subseteq\ \quad Per(z) $$
and
$$
  \{|z|-1, |z|-|v|-|u|\}\cap Per(z)=\emptyset.
$$

\begin{lem}\label{lem:ABC}
  If $x\in\widetilde A\cup \widetilde B\cup \widetilde C$, i.e., $x\in
  T^{-m}I$ for some $m\in [0,r_{k+3})$, then
$${\widehat x}=z[r_{k+3}-m-1,r_{k+3})z[0,r_{k+3}-m-1)=\sigma^{m+1-r_{k+3}}(z).$$
In particular,
$$Per({\widehat x})=Per(z)\oplus (m\oplus 1\ominus r_{k+3}).$$
\end{lem}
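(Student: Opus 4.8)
The plan is to track the orbit segment $x,Tx,\dots,T^{r_{k+3}-1}x$ step by step, splitting it into the portion that stays inside the tower $\Lambda(I,0,r_{k+3})$ and the portion after it has left this tower through the base $I$. Since $\widetilde A\cup\widetilde B\cup\widetilde C=\bigcup_{m=0}^{r_{k+3}-1}T^{-m}I$ is exactly this (Rokhlin) tower, the hypothesis $x\in\widetilde A\cup\widetilde B\cup\widetilde C$ means precisely that $x\in T^{-m}I$ for a \emph{unique} $m\in\{0,\dots,r_{k+3}-1\}$. For $0\le j\le m$ we have $T^{j}x\in T^{-(m-j)}I$, i.e.\ $T^{j}x$ sits in the $(m-j)$-th level of $\Lambda(I,0,r_{k+3})$, so by the definition of the $\rc$-code $z$ of this tower, $\phi_\rc(T^{j}x)=z_{r_{k+3}-1-(m-j)}$; hence the first $m+1$ symbols of $\widehat x$ are $z[r_{k+3}-m-1,r_{k+3})$.

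The core of the argument is the sub-claim: \emph{for every $p\in TI=TI_{k+3}$ one has $\widehat p=z$.} To prove it, recall the description of the action of $T$ on the two Rokhlin towers forming $\pc^{r_{k+3}-1}$ (Proposition \ref{pro:twolength}, cf.\ Figure \ref{fig:scheme}): the base of the left tower, $I_{k+3}$, is mapped by $T$ into the top $T^{-(q_{k+3}-1)}I_{k+2}$ of the right tower $\Lambda(I_{k+2},0,q_{k+3})$, whose $\rc$-code is $w''$. Thus $p\in TI_{k+3}\subseteq T^{-(q_{k+3}-1)}I_{k+2}$, and reading off the code as $p$ descends this tower gives $\phi_\rc(T^{i}p)=w''_{i}$ for $0\le i<q_{k+3}$, with in particular $T^{q_{k+3}-1}p\in I_{k+2}$. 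Applying the same description at level $k+2$, the base $I_{k+2}$ is mapped by $T$ into the top $T^{-(q_{k+2}-1)}I_{k+1}$ of $\Lambda(I_{k+1},0,q_{k+2})$, whose $\rc$-code is $w'$; hence $\phi_\rc(T^{q_{k+3}+i}p)=w'_{i}$ for $0\le i<q_{k+2}$. Since $z=w''w'$ and $q_{k+3}+q_{k+2}=r_{k+3}$, this gives $\widehat p=w''w'=z$. Applying the sub-claim to $p=T^{m+1}x$ (which lies in $TI$ because $T^{m}x\in I$) yields $\phi_\rc(T^{j}x)=\phi_\rc(T^{\,j-m-1}p)=z_{j-m-1}$ for $m<j\le r_{k+3}-1$, so the remaining $r_{k+3}-m-1$ symbols of $\widehat x$ are $z[0,r_{k+3}-m-1)$.

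Concatenating the two parts gives $\widehat x=z[r_{k+3}-m-1,r_{k+3})\,z[0,r_{k+3}-m-1)$, which is exactly $\sigma^{m+1-r_{k+3}}(z)$ by the definition of $\sigma$ (iterating the single cyclic shift). The statement about $Per$ then follows by applying the identity $Per(\sigma(u))=Per(u)\oplus 1$ repeatedly, giving $Per(\widehat x)=Per(z)\oplus(m+1-r_{k+3})=Per(z)\oplus(m\oplus 1\ominus r_{k+3})$. The one genuinely delicate point is the sub-claim: establishing that once the orbit leaves the big tower through its base, it re-enters the refinement structure exactly in the position dictated by the cyclic reading of $z$. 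This is precisely where the fine action of $T$ on the Rokhlin towers forming $\pc^{r_{k+3}-1}$ and $\pc^{r_{k+2}-1}$ — and the injectivity argument behind Figure \ref{fig:scheme} — is indispensable; everything else is bookkeeping with indices.
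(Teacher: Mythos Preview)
Your proof is correct and follows essentially the same route as the paper: establish a sub-claim that points emerging from the base $I_{k+3}$ have $\rc$-name $z$, then split $\widehat x$ at position $m+1$. Two small differences are worth noting. For the first $m{+}1$ symbols you read the code of the tower $\Lambda(I_{k+3},0,r_{k+3})$ directly, whereas the paper applies the sub-claim a second time to the shifted point $y'=T^{m-(r_{k+3}-1)}x$; and inside the sub-claim, after reaching $I_{k+2}$ you descend the tower $\Lambda(I_{k+1},0,q_{k+2})$ (code $w'$) at level $k{+}2$, while the paper stays at level $k{+}3$ and handles the two possible tops by a short case analysis, observing $w''[0,q_{k+2})=w'$. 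Both variants are minor simplifications of the paper's argument rather than a different method.
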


\begin{proof}
  First, let us prove that for $x\in T^{-(q_{k+3}-1)}I_{k+2}$, $\hat
  x=z$. Since $T^{-(q_{k+3}-1)}I_{k+2}$ is the top of the tower
  $\Lambda(I_{k+2},0,q_{k+3})$, we get that the beginning of $\hat x$
  equals the $\rc$-code of the tower, i.e.,
$$\hat x[0,q_{k+3})=w''=z[0,q_{k+3}).$$ 
Put $y=T^{q_{k+3}}x$.
Surely, $\hat x[q_{k+3},r_{k+3})$ equals the
$\rc$-name of $y$ of length $q_{k+2}$. Since $y\in T(I_{k+2})$,
where $I_{k+3}$ is the base of the tower
$\Lambda(I_{k+2},0,q_{k+3})$, the point $y$ should be on the top of
either of the towers $\Lambda(I_{k+3},0,q_{k+2})$ and
$\Lambda(I_{k+2},0,q_{k+3})$. In the former case, the $\rc$-name of
$y$ of length $q_{k+2}$ equals $w'$. In the latter case, the
$\rc$-name equals the beginning of $w''$ of length $q_{k+2}$,
i.e.,
$$\hat x[q_{k+3},r_{k+3})=w''[0,q_{k+2})=w'.$$
Altogether, $\hat x=w''w'=z$.

We proceed with the proof of the lemma. Let $x\in T^{-m}I$ for some
$0\leq m<q_{k+3}$. Put $y'=T^{m-(q_{k+3}-1)}x$, $y''=T^{m+1}x$. Thus,
$$y'\in T^{-(r_{k+3}-1)}I\subset T^{-(q_{k+3}-1)}I_{k+2},\qquad y''\in T(I)\subset T^{-(q_{k+3}-1)}I_{k+2}.$$
(For the last inclusion see the discussion below Proposition
\ref{pro:twolength}). We get that $\widehat{y'}$ and $\widehat{y''}$
equals $z$. As follows immediately from the definition,
\begin{align*}
  {\widehat x}[0,m+1)&=\widehat{y'}[r_{k+3}-m-1,r_{k+3})=z[r_{k+3}-m-1,r_{k+3}),\\
  {\widehat
    x}[m+1,r_{k+3})&=\widehat{y''}[0,r_{k+3}-m-1)=[0,r_{k+3}-m-1),
\end{align*}
which concludes the proof.
\end{proof}

\begin{lem}\label{lem:E}
  If $x$ is from $\widetilde E$, i.e., $x\in T^{-m}J$, $0\leq
  m<r_k-1$, then
$${\widehat x}=w''[q_{k+3}-m-1,q_{k+3})w''w'[0,q_{k+2}-m-1)$$
and neither $m\oplus |w'|$ nor $m\ominus r_k\oplus 1$ belong to
$Per(\widehat x)$.
\end{lem}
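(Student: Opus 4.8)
The plan is to mimic the structure of the proof of Lemma \ref{lem:ABC}, but now tracking a point that sits inside the tower $E$ rather than inside one of $A$, $B$, $C$. Recall that $J = I_{k+2}\setminus T^{-q_{k+2}}I_{k+3}$ and that $E = \Lambda(J,0,r_k-1)$, so a point $x\in\widetilde E$ has the form $x\in T^{-m}J$ with $0\le m<r_k-1$. The first observation is that $T^{-(q_{k+3}-1)}J$ is a subset of the top of the tower $\Lambda(I_{k+2},0,q_{k+3})$ — indeed $J\subset I_{k+2}$, so $T^{-(q_{k+3}-1)}J\subset T^{-(q_{k+3}-1)}I_{k+2}$, which is exactly where the argument in Lemma \ref{lem:ABC} established that the $\rc$-name of length $q_{k+3}$ is $w''$. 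So for a point $y'\in T^{-(q_{k+3}-1)}J$ the beginning of $\widehat{y'}$ of length $q_{k+3}$ is $w''$, and then, exactly as in Lemma \ref{lem:ABC} via the discussion under Proposition \ref{pro:twolength}, $T^{q_{k+3}}y'\in T(I_{k+2})\subset T^{-(q_{k+3}-1)}I_{k+2}$, so the remaining block of length $q_{k+2}$ is $w'$; hence $\widehat{y'}=w''w'=z$, provided $r_{k+3}-1\le q_{k+3}-1$, i.e. provided $r_{k+3}\le q_{k+3}$. Here is the first subtlety: $r_{k+3}=q_{k+3}+q_{k+2}>q_{k+3}$, so the window of length $r_{k+3}$ overshoots the top of the tower $\Lambda(I_{k+2},0,q_{k+3})$. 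I need instead to read the $\rc$-name of a point at the \emph{top} of the relevant tower and shift. The cleaner route, following Lemma \ref{lem:ABC} verbatim, is to set $y'=T^{m-(q_{k+3}-1)}x$, so that $y'\in T^{-(q_{k+3}-1)}J\subset T^{-(q_{k+3}-1)}I_{k+2}$, and conclude that the first $m+1$ symbols of $\widehat x$ agree with the last $m+1$ symbols of $\widehat{y'}$; but $\widehat{y'}$ is again a name of length $r_{k+3}>q_{k+3}$, so I must determine $\widehat{y'}$ directly. Since $y'$ is in the top level of $\Lambda(I_{k+2},0,q_{k+3})$, its name of length $q_{k+3}$ is $w''$, and its name of length $r_{k+3}=q_{k+3}+q_{k+2}$ continues — as computed above — with $w''[0,q_{k+2})$; writing $w''=(w')^{c_{k+3}}w$ one sees $w''[0,q_{k+2})=w'$ since $|w'|=q_{k+2}$. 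So $\widehat{y'}=w''w'=z$ \emph{for this sub-case too}, i.e. $J$ behaves like the part of $I_{k+2}$ whose forward iterate lands at the top of $\Lambda(I_{k+3},0,q_{k+2})$, which is consistent with $J=I_{k+2}\setminus T^{-q_{k+2}}I_{k+3}$ being precisely the complement of the piece that feeds into $\Lambda(I_{k+2},0,q_{k+3})$'s base. So $\widehat x$ is obtained from $z$ by reading $m+1$ symbols backward from position $q_{k+3}-1$ wrapping into $w''$ and then $q_{k+2}-m-1$ symbols forward — precisely the stated formula $\widehat x=w''[q_{k+3}-m-1,q_{k+3})\,w''w'[0,q_{k+2}-m-1)$, once one recognizes that the first block of length $q_{k+3}$ past position $q_{k+3}-m-1$ is exactly $w''[q_{k+3}-m-1,q_{k+3})w''[0,\cdot)$; I will need to be slightly careful reconciling the indexing, using $z=w''w'$ and $|w''|=q_{k+3}$, $|w'|=q_{k+2}$, $|w''|+|w'|=r_{k+3}$.

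For the second assertion — that neither $m\oplus|w'|$ nor $m\ominus r_k\oplus 1$ lies in $Per(\widehat x)$ — the plan is to translate these two positions back, via the relation $\widehat x=\sigma^{?}(\text{shift of }z)$ or directly from the explicit formula, into two positions of $z$ that are \emph{known} not to be in $Per(z)$, namely $|z|-1$ and $|z|-|v|-|u|$ identified just before Lemma \ref{lem:ABC}. Concretely, the explicit description of $\widehat x$ exhibits it as a cyclic rotation of $z$ (or of $\sigma^{j}(z)$ for the appropriate $j$ determined by $m$), and $Per$ transforms under $\sigma$ by $Per(\sigma(u))=Per(u)\oplus 1$; so $Per(\widehat x)$ is a shift of $Per(z)$ by a constant depending on $m$, and the two flagged positions are exactly the images of the two bad positions of $z$. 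The arithmetic to pin down the shift amount: $\widehat x$ starts at position $q_{k+3}-m-1$ of the cyclic word $w''w'=z$ read as an element of $\ZZ_{r_{k+3}}$, so $\widehat x=\sigma^{-(q_{k+3}-m-1)}(z)$ in the notation of the excerpt (with the convention matching Lemma \ref{lem:ABC}'s $\sigma^{m+1-r_{k+3}}$ when one substitutes the right base point), whence $Per(\widehat x)=Per(z)\oplus\delta_m$ for an explicit $\delta_m$; then $m\oplus|w'|\in Per(\widehat x)$ would force $m\oplus|w'|\ominus\delta_m=|z|-1$ or $=|z|-|v|-|u|$, and one checks that for $0\le m<r_k-1$ the value $m\oplus|w'|\ominus\delta_m$ equals $|z|-1$, i.e. it hits a forbidden position; similarly $m\ominus r_k\oplus 1\ominus\delta_m=|z|-|v|-|u|$. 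I expect the main obstacle to be exactly this bookkeeping: getting the modular indices right so that the two geometrically natural positions $m\oplus|w'|$ and $m\ominus r_k\oplus 1$ (which mark where the defining cuts of $\rc$, namely the cut-indices $0$ and $r_k-1$ translated, intrude into the window) land precisely on $|z|-1$ and $|z|-|v|-|u|$ and not merely nearby. A clean way to organize it is to note that $B$ is $\Lambda(K,0,r_k-1)$ with $K=T^{-q_{k+2}}I_{k+3}$, so $E$ and $B$ are the two pieces that $\pc^{r_{k+3}+r_k-2}$ refines $I_{k+2}$ into by the new cutpoints $\lra{j}$, $r_{k+3}\le j\le r_{k+3}+r_k-2$; the position in the window corresponding to such a new cut is what $m\ominus r_k\oplus 1$ records, and the original cut at the top, $\lra{r_{k+3}-1}$, is what $m\oplus|w'|$ records (since $|w''|=q_{k+3}$ and $|w'|=q_{k+2}$ and $q_{k+3}+q_{k+2}-1=r_{k+3}-1$). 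Pushing both back by $\delta_m$ and comparing with the two forbidden positions of $z$ finishes the lemma, and I would present it as two short computations after stating the rotation identity.

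I would write the proof in the same style as Lemma \ref{lem:ABC}: first establish $\widehat{y}=z$ for $y$ at the top of $\Lambda(I_{k+2},0,q_{k+3})$ intersected with $T^{-(q_{k+3}-1)}J$ (reusing verbatim the top-of-tower argument already given, since $J\subset I_{k+2}$), then read off $\widehat x$ by the forward/backward splitting at position $m$, then invoke the $\sigma$-covariance of $Per$ together with the two forbidden positions of $z$ recorded before Lemma \ref{lem:ABC}. The only genuinely new input beyond Lemma \ref{lem:ABC} is that $J$, being $I_{k+2}\setminus T^{-q_{k+2}}I_{k+3}$ rather than all of $I_{k+2}$, still has $T^{-(q_{k+3}-1)}J$ sitting in the top of $\Lambda(I_{k+2},0,q_{k+3})$ and still has its forward iterate $T^{q_{k+3}}$-image at the top of $\Lambda(I_{k+3},0,q_{k+2})$ — this is exactly the ``former case'' in the dichotomy inside the proof of Lemma \ref{lem:ABC}, now isolated by the choice of base $J$. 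Once that is noted, everything else is the same modular-arithmetic bookkeeping that the rest of Section \ref{sec:proof-pro-rk-1} runs on.
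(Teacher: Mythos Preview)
Your derivation of the explicit formula for $\widehat x$ is essentially the paper's argument: using $y'=T^{m-(q_{k+3}-1)}x\in T^{-(q_{k+3}-1)}I_{k+2}$ and $y=T^{m+1}x\in T(J)\subset T^{-(q_{k+3}-1)}I_{k+2}$, both with $\widehat{y'}=\widehat{y}=z$, and reading off $\widehat x[0,m+1)=w''[q_{k+3}-m-1,q_{k+3})$ and $\widehat x[m+1,r_{k+3})=z[0,r_{k+3}-m-1)=w''\,w'[0,q_{k+2}-m-1)$. That part is fine.

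The plan for the second assertion, however, rests on a false premise. You assert that ``the explicit description of $\widehat x$ exhibits it as a cyclic rotation of $z$'' and write $\widehat x=\sigma^{-(q_{k+3}-m-1)}(z)$; this is precisely what is \emph{not} true for $x\in\widetilde E$, and is in fact the whole point of separating Lemma~\ref{lem:E} from Lemma~\ref{lem:ABC}. If $\widehat x$ were a rotation of $z$, then since $\widehat x[m+1,r_{k+3})=z[0,r_{k+3}-m-1)$ the only candidate is $\sigma^{m+1}(z)$, which forces $\widehat x[0,m+1)=z[r_{k+3}-m-1,r_{k+3})=w'[q_{k+2}-m-1,q_{k+2})$, i.e.\ the last $m+1$ letters of $w'$. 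But the formula you just derived gives $\widehat x[0,m+1)=w''[q_{k+3}-m-1,q_{k+3})$, the last $m+1$ letters of $w''$. Since $w''$ ends in $u$ and $w'$ ends in $v$, and $u_{q_{k-1}-1}\neq v_{q_k-1}$, already for $m=0$ these disagree. Consequently $Per(\widehat x)$ is \emph{not} a translate of $Per(z)$, and your proposed transfer of the two forbidden positions of $z$ does not go through. Concretely, the position $m\ominus r_k\oplus 1=m+1+r_{k+3}-r_k$ lies exactly in the range where $\widehat x$ and your putative rotation differ, so your method would compute it to be \emph{in} $Per(\widehat x)$ (since $q_{k+3}-r_k\in[0,|z|-r_k)\subset Per(z)$), the opposite of what must be shown.

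The paper does not use $\sigma$-covariance here at all; it argues by direct inspection of letters. From the formula, $\widehat x_m$ is the last letter of $w''$, hence of $u$, while $\widehat x_{m+|w'|}=w''_{|w'|-1}=w'_{q_{k+2}-1}$ is the last letter of $v$; these differ, so $m\oplus|w'|\notin Per(\widehat x)$. Similarly, writing $w'=w^{c_{k+2}-1}v^{c_{k+1}}\,uv$ and $w''=(w')^{c_{k+3}}v^{c_{k+1}-1}\,vu$ one reads off $\widehat x_{m\ominus r_k\oplus 1}=w'_{|w'|-r_k}=u_0$ and $\widehat x_{m\ominus r_k\oplus 1\ominus|w'|}=w''_{|w''|-r_k}=v_0$, which differ. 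This is the missing step you need to replace your rotation argument with.
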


\begin{proof}
  Let $x$ be from $\widetilde E$, i.e., $x\in T^{-m}J$, $0\leq
  m<r_k-1$. Then $T^{m-(r_{k+3}-1)}x$ belongs to the top of the tower
  $\Lambda(I_{k+2},0,q_{k+3})$ whose $\rc$-code is $w''$. Hence, $\hat
  x[0,m+1)$ equals $w''[q_{k+3}-m-1,q_{k+3})$. Since the point
  $y=T^{m+1}x$ belongs to the top of the tower
  $\Lambda(I_{k+2},0,q_{k+3})$, $\hat y$ equals $z$ (see the proof
  of the previous proposition). It implies that
  \begin{align*}
    \hat x[m+1,r_{k+3})&=\hat y[0,r_{k+3}-m-1)=z[0,r_{k+3}-m-1)\\
    &=w''\, w'[0,q_{k+2}-m-1,q_{k+3}).
  \end{align*}
  We proved the first part of the lemma.

  The equality above implies that ${\widehat x}_{m}$ is the last
  letter of $w''$. But $w''$ ends with $w$ and $w$ ends with $u$. So,
  $x_m$ is equal to the last letter of $u$. Moreover,
$${\widehat x}[m+1,m+|w'|+1)=w''[0,|w'|)=w'=w^{c_{k+2}}v.$$
Thus, ${\widehat x}_{m+|w'|}$ equals the last letter of $v$ and so
differs from $x_m$, i.e., $(m\oplus |w'|)$ does not belong to $Per({\widehat x})$.

Since $m<r_k-1$ and $r_k<|w''|=q_{k+2}$, we get
$${\widehat x}_{m\ominus r_k\oplus 1}={\widehat x}_{m+1+|w''|+|w'|-r_k}=w'_{|w'|-r_k}$$
and
$${\widehat x}_{m\ominus r_k\oplus 1\ominus |w'| }={\widehat x}_{m+1+|w''|-r_k}=w''_{|w''|-r_k}.$$
However,
$$w'=w^{c_{k+2}}v=\overbrace{w^{c_{k+2}-1}v^{c_{k+1}}}^{|w'|-r_k}uv \quad \text{ and } \quad w''=(w')^{c_{k+3}}w=\overbrace{(w')^{c_{k+3}}v^{c_{k+1}-1}}^{|w''|-r_k}vu.$$

Since $r_k=|u|+|v|$, ${\widehat x}_{m\ominus r_k\oplus 1}$ equals the
first letter of $u$ and ${\widehat x}_{m\ominus r_k\oplus 1\ominus
  |w'| }$ equals the first letter of $v$. By the properties of $u$ and
$v$, the letters differ, i.e., $(m\ominus r_k\oplus 1)$ does not belong to $Per({\widehat
  x})$.
\end{proof}

\begin{lem}
  \mbox{}
  \begin{itemize}
  \item If $x\in T^{-m}I$ and $y\in T^{-m'}I$ for some $0\leq
    m<m'<r_{k+3}$, then ${\widehat x}\neq{\widehat y}$.
  \item If $x\in \widetilde{A}\cup\widetilde{B}\cup\widetilde{C}$ and
    $y\in \widetilde{E}$, then ${\widehat x}\neq{\widehat y}$.
  \end{itemize}
\end{lem}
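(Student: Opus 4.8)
The lemma has two bullets; both assert that distinct levels of the relevant towers carry distinct $\rc$-names of length $r_{k+3}$, and the natural tool is the function $Per$, since Lemmas \ref{lem:ABC} and \ref{lem:E} already pin down $Per(\widehat x)$ for the two families of points. First I would handle the first bullet. Let $x\in T^{-m}I$, $y\in T^{-m'}I$ with $0\le m<m'<r_{k+3}$. By Lemma \ref{lem:ABC}, $\widehat x=\sigma^{m+1-r_{k+3}}(z)$ and $\widehat y=\sigma^{m'+1-r_{k+3}}(z)$, so $\widehat x=\widehat y$ would force $z$ to be invariant under $\sigma^{\,m'-m}$ with $1\le m'-m<r_{k+3}$; equivalently $z$ would be periodic with a period $d=m'-m$ dividing $r_{k+3}$ in the cyclic sense. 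I would then derive a contradiction from the information we already extracted about $Per(z)$: we know $\NN\cap[0,|z|-|v|-|u|)\subseteq Per(z)$ while $|z|-1\notin Per(z)$ and $|z|-|v|-|u|\notin Per(z)$. A cyclic period $d$ of $z$ must be compatible with $Per(z)$ — concretely, applying the shift $\sigma^d$ repeatedly must preserve the "word equals its $|w'|$-shift at position $j$" relation — so the two excluded positions $|z|-1$ and $|z|-|v|-|u|$ would have to lie in the same residue class mod the true period as positions inside the long run $[0,|z|-|v|-|u|)$, contradicting their exclusion. The cleanest way is: if $\widehat x=\widehat y$ then $z$ equals a cyclic shift of itself by $d=m'-m$ positions, hence $z$ is a concatenation of cyclic rotations of a single block of length $\gcd(d,r_{k+3})=:e$, so $z_j=z_{j\bmod e}$ cyclically; but then $Per(z)$ is a union of full residue classes mod $e$, which is impossible because $0\in Per(z)$ (as $|z|-|v|-|u|\ge |w'|$, so position $0$ lies in the long coincidence run) while $|z|-|v|-|u|\notin Per(z)$ yet $|z|-|v|-|u|\equiv$ some element of $Per(z)$ would be needed if $r_k\mid e$... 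I should be a little careful and instead argue directly: a cyclic period $d$ of $z$ with $d\mid r_{k+3}$ forces $Per(z)\oplus d=Per(z)$, but $|z|-|v|-|u|-d$ and the other shifted positions land in the known run, giving a contradiction with $|z|-|v|-|u|\notin Per(z)$.

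For the second bullet, suppose $x\in\widetilde A\cup\widetilde B\cup\widetilde C$ and $y\in\widetilde E$ with $\widehat x=\widehat y$. By Lemma \ref{lem:ABC}, $Per(\widehat x)=Per(z)\oplus(m\oplus1\ominus r_{k+3})$ for the appropriate $m\in[0,r_{k+3})$; in particular $Per(\widehat x)$ is a cyclic translate of $Per(z)$, hence contains a run of at least $|z|-|v|-|u|$ consecutive indices (modulo $r_{k+3}$) and misses exactly the cyclic translates of $|z|-1$ and $|z|-|v|-|u|$ among the positions we have controlled. Meanwhile Lemma \ref{lem:E} tells us that for $y\in\widetilde E$, $x\in T^{-m'}J$ say, \emph{two} specified positions $m'\oplus|w'|$ and $m'\ominus r_k\oplus1$ are \emph{both} excluded from $Per(\widehat y)$, and these two excluded positions differ by $|w'|+r_k-1 \pmod{r_{k+3}}$, whereas the two guaranteed-excluded positions coming from the $\widetilde A\cup\widetilde B\cup\widetilde C$ side are cyclic translates of $|z|-1$ and $|z|-|v|-|u|$, which differ by $|v|+|u|-1=r_k-1 \pmod{r_{k+3}}$. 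Since $Per(z)$ (hence any cyclic translate of it) can only fail to contain very few indices in the range we understand — essentially just those two — and the gap $r_k-1$ between the two excluded positions on the $ABC$ side does not match the gap $|w'|+r_k-1$ forced on the $E$ side, the two descriptions of $Per(\widehat x)=Per(\widehat y)$ are incompatible. More robustly: on the $E$ side, since $|w'|>r_k-1$ by the convergent recursions, the position $m'\oplus|w'|$ lies well inside what would have to be the long coincidence run for a translate of $Per(z)$, yet it is excluded — contradiction.

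**Main obstacle.** The technical heart is turning the qualitative fact "$Per(z)$ contains a long run and misses only a couple of specified points" into a rigid enough statement to kill all possible matches in both bullets; in particular, for the second bullet I must make sure the position $m'\oplus|w'|$ (excluded by Lemma \ref{lem:E}) genuinely falls in the interior of the long coincidence run of the relevant cyclic translate of $Per(z)$, which requires comparing $|w'|$, $r_k$, $r_{k+3}$ and the run-length $|z|-|v|-|u|=|w''|+|w'|-r_k$ using only the convergent recurrences $q_{j}=c_jq_{j-1}+q_{j-2}$ and $r_j=q_j+q_{j-1}$ — a short but load-bearing inequality check. I would also double-check the edge case $c_{k+1}=c_{k+2}=c_{k+3}=1$ to be sure none of the blocks $u,v,w,w',w''$ degenerate in a way that shortens the run below what is needed. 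Once those inequalities are in hand, both bullets follow by the residue-class / translate-incompatibility argument sketched above.
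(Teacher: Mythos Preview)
Your overall strategy via $Per$ is exactly the paper's, and for the first bullet your argument is correct once you verify the inequality $r_{k+3}>2r_k$ (you flag the extremal case $c_{k+1}=c_{k+2}=c_{k+3}=1$ but never actually check it; it follows from $q_{k+3}\ge q_{k+2}+q_{k+1}\ge 2q_{k+1}+q_k>2r_k$). With that in hand, either $d=m'-m$ or $r_{k+3}-d$ lies in $(0,\,r_{k+3}-r_k)$, and your translation argument $Per(z)\oplus d=Per(z)$ yields the contradiction at the position $|z|-r_k$. The paper does this slightly more directly, exhibiting a single index lying in $Per(\widehat y)\setminus Per(\widehat x)$, but the content is the same.

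The second bullet, however, has a real gap. Your ``gap-matching'' argument compares the cyclic difference $r_k-1$ between the two \emph{known} excluded positions on the $ABC$ side with the difference $|w'|+r_k-1$ on the $E$ side. But the information at hand is only $[0,|z|-r_k)\subseteq Per(z)$ together with two specific exclusions; the complement of $Per(z)$ may contain further points of $[|z|-r_k,|z|)$, so a mismatch of gaps between \emph{particular} excluded points proves nothing. Your ``more robust'' alternative then tries to place the single point $m'\oplus|w'|$ inside the long coincidence run of $Per(\widehat x)$; but that run is a translate of $[0,|z|-r_k)$ whose offset depends on the $ABC$-level $m$, not on $m'$, so the inequality $|w'|>r_k-1$ alone cannot locate $m'\oplus|w'|$ relative to it.

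What does work --- and this is the paper's argument --- is a diameter comparison in the circle metric on $\ZZ_{r_{k+3}}$. Since the complement of $Per(z)$ is contained in the arc $[|z|-r_k,|z|)$ of $r_k$ consecutive residues, for any $x\in\widetilde A\cup\widetilde B\cup\widetilde C$ one has $\diam\bigl(\ZZ_{r_{k+3}}\setminus Per(\widehat x)\bigr)\le r_k$. On the $E$ side, Lemma~\ref{lem:E} puts \emph{both} $m'\oplus|w'|$ and $m'\ominus r_k\oplus 1$ in the complement, and their circle distance is $\min\bigl(|w'|+r_k-1,\ r_{k+3}-|w'|-r_k+1\bigr)$; using $|w'|=q_{k+2}\ge 2$ and $r_{k+3}-|w'|=q_{k+3}$ together with the convergent recursions, both terms exceed $r_k$. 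Hence $\diam\bigl(\ZZ_{r_{k+3}}\setminus Per(\widehat y)\bigr)>r_k$, and the diameters disagree. All your ingredients were in place; the missing move is to package them as a diameter bound rather than trying to match individual excluded positions.
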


 \begin{proof}
   Let $x\in T^{-m}I$ and $y\in T^{-m'}I$ for some $0\leq
   m<m'<r_{k+3}$. Since $r_{k+3}-r_k$ is greater than $r_k/2$, then
$$0<m\ominus m'<r_{k+3}-r_k\qquad \text{ or }\qquad 0<m'\ominus m<r_{k+3}-r_k.$$
Suppose that the former inequality holds. Then the number
$$m\ominus r_{k+3}=m'\oplus (m\ominus m') \ominus r_{k+3}$$
does not belong to $Per({\widehat x})$, but belongs to $Per({\widehat
  y})$ (see Lemma \ref{lem:ABC}). It implies that ${\widehat
  x}\neq{\widehat y}$.  By similar arguments, the latter of the above
mentioned inequalities implies that $m'\ominus r_{k+3}$ does not
belong to $Per({\widehat y})$, but belongs to $Per({\widehat x})$. We
get again that ${\widehat x}$ and ${\widehat y}$ differ.

Let the group $\ZZ_{r_{k+3}}$ be equipped with the ``circle'' distance
$d(i,j)$ defined as the minimum of $i\ominus j$ and $j\ominus i$. Let
$x\in \widetilde{A}\cup\widetilde{B}\cup\widetilde{C}$ and $y\in
\widetilde{E}$, i.e., $y\in T^{-m}J$ for some $m<r_k$.  By Lemma
\ref{lem:ABC} we get that
$$
\diam\left(\ZZ_{r_{k+3}}\setminus Per({\widehat x})\right) \leq j\leq
r_k,
$$
and by Lemma \ref{lem:E} we deduce that
$$
\diam \left(\ZZ_{r_{k+3}}\setminus Per(\widehat y)\right) \geq
d(m\oplus |w'|,m\ominus r_k\oplus 1)=\min(r_k\ominus 1\oplus
|w'|,1\ominus r_k\ominus |w'|).
$$
Since $r_k+|w'|\leq r_{k+3}$ and $|w'|\geq 2$, we get
\begin{align*}
  r_k\ominus 1\oplus |w'|&=r_k-1+|w'|>r_k \quad \text{ and } \\
  1\ominus r_k\ominus |w'|&=r_{k+3}+1-r_k-|w'|>q_{k+3}+q_{k+2}-q_k-q_{k-1}-q_{k+2}\\
  &=q_{k+2}+q_{k+1}-q_k-q_{k-1} \geq q_{k+2}\geq r_{k+1}>r_k.
\end{align*}
Thus, the diameters of the above mentioned sets differ. It implies
${\widehat x}\neq{\widehat y}$.
\end{proof}

\begin{figure}
  \begin{tikzpicture}
    \pgfmathsetmacro{\WL}{0.7} 
    \pgfmathsetmacro{\WR}{2.5} 
    \pgfmathsetmacro{\HL}{1.4} 
    \pgfmathsetmacro{\HB}{0.5} 
    \pgfmathsetmacro{\HR}{2} \pgfmathsetmacro{\WD}{\WR-\WL};
    \pgfmathsetmacro{\gapfirst}{1.1}
    \pgfmathsetmacro{\gapsecond}{10-3*\WR}
    \pgfmathsetmacro{\shiftXfirst}{(\gapfirst+\WL+\WR)}
    \pgfmathsetmacro{\shiftXsecond}{(\shiftXfirst+\gapsecond+\WL+\WR)}
    \node (A) at (0,0) [anchor=south west,draw,rectangle,minimum
    width=\WL cm,minimum height=\HL cm,outer
    sep=0pt,pattern=horizontal lines]{}; 
    \node (recR) at (A.south east) [anchor=south
    west,draw,rectangle,minimum width=\WR cm,minimum height=\HR
    cm,outer sep=0pt,,pattern=horizontal lines] {}; \node (E) at
    (recR.south west) [fill=white,draw,anchor=south
    west,rectangle,minimum width=\WD cm,minimum height=\HB cm,outer
    sep=0pt] {}; \coordinate (labl) at ($(recR.north)+(-\WL/2,2ex)$);
    \node at (labl) [anchor=south] {$\rc'$}; \node at (E.center)
    {$\widetilde E$};

    \node (gap) at (recR.south east) [anchor=south
    west,rectangle,minimum width=\gapfirst cm,minimum height=\HR
    cm,outer sep=0pt] {$\bigvee$};

    \node (A) at (gap.south east) [anchor=south west,rectangle,minimum
    width=\WL cm,minimum height=\HL cm,outer
    sep=0pt,pattern=horizontal lines]{}; 
    \node (recR) at (A.south east) [anchor=south
    west,rectangle,minimum width=\WR cm,minimum height=\HR cm,outer
    sep=0pt,,pattern=horizontal lines] {}; \node (E) at ($(recR.south
    west)+(0,\HB)$) [fill=white,draw,anchor=south
    west,rectangle,minimum width=\WD cm,minimum height=\HB cm,outer
    sep=0pt] {}; \coordinate (labl) at ($(recR.north)+(-\WL/2,2ex)$);
    \draw (A.north east) to (A.north west) to (A.south west) to
    (recR.south east) to (recR.north east) to (recR.north west) to
    (E.north west); \draw (E.south east) to ($(recR.south
    west)+(\WD,0)$); \node at (labl) [anchor=south]
    {$T^{-(r_k-1)}\rc'$}; \node at (E.center) {$T^{-(r_k-1)}\widetilde
      E$};

    \node (gap) at (recR.south east) [anchor=south
    west,rectangle,minimum width=\gapsecond cm,minimum height=\HR
    cm,outer sep=0pt] {}; \node at (gap.center) [scale=2] {$=$};

    \node (A) at (gap.south east) [anchor=south
    west,draw,rectangle,minimum width=\WL cm,minimum height=\HL
    cm,outer sep=0pt,pattern=horizontal lines]{}; 
    \node (recR) at (A.south east) [anchor=south
    west,draw,rectangle,minimum width=\WR cm,minimum height=\HR
    cm,outer sep=0pt,,pattern=horizontal lines] {}; \node (E) at
    (recR.south west) [draw,anchor=south west,rectangle,minimum
    width=\WD cm,minimum height=\HB cm,outer sep=0pt] {}; \node
    (Eprime) at (E.north west) [draw,anchor=south
    west,rectangle,minimum width=\WD cm,minimum height=\HB cm,outer
    sep=0pt] {};


    \coordinate (labl) at ($(recR.north)+(-\WL/2,2ex)$); \node at
    (labl) [anchor=south] {$\pc^{r_{k+3}+2r_k-3}$};

  \end{tikzpicture}

  \caption{Partitions $\rc'$, $T^{-(r_k-1)}\rc'$ and
    $\pc^{r_{k+3}+2r_k-2}$.}
  \label{fig:rcprime}
\end{figure}

  \begin{cor}
    Partition $\rc^{r_{k+3}}$ is finer than $\rc'$, where
$$\rc'=\{\widetilde E\}\cup A\cup B\cup \Lambda(J\cup K,r_k-1,q_{k+3}).$$
\end{cor}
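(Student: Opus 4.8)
Proof proposal.

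\medskip

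The plan is to establish the refinement $\rc^{r_{k+3}}>\rc'$ by showing that whenever $x,y$ lie in distinct sets of $\rc'$, their $\rc$-names $\widehat x,\widehat y$ of length $r_{k+3}$ differ; since a set of $\rc^{r_{k+3}}$ is precisely a maximal collection of points with a common $\rc$-name of length $r_{k+3}$, this is equivalent to the assertion. First note $\rc'$ really is a partition of $[0,1)$: its sets are $\widetilde E$, the $q_{k+2}$ levels of $A$, the $r_k-1$ levels of $B$, and the sets $T^{-m}(J\cup K)=T^{-m}I_{k+2}$ for $r_k-1\le m<q_{k+3}$, and these tile the two complementary Rokhlin towers $\widetilde A\cup\widetilde B\cup\widetilde C=\bigcup_{0\le m<r_{k+3}}T^{-m}I$ and $\widetilde D\cup\widetilde E=\bigcup_{0\le m<q_{k+3}}T^{-m}J$. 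I will organise the argument around the splitting $[0,1)=(\widetilde A\cup\widetilde B\cup\widetilde C)\sqcup\widetilde D\sqcup\widetilde E$.

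Two configurations are immediate from the previous lemma. Every nonempty intersection of a set of $\rc'$ with $\widetilde A\cup\widetilde B\cup\widetilde C$ is a single level $T^{-m}I$, and distinct sets of $\rc'$ give distinct levels; so if $x,y\in\widetilde A\cup\widetilde B\cup\widetilde C$ lie in distinct sets of $\rc'$, then $x\in T^{-m}I$, $y\in T^{-m'}I$ with $m\neq m'$, and the first item of the previous lemma gives $\widehat x\neq\widehat y$. If one of $x,y$ lies in $\widetilde E$ and the other in $\widetilde A\cup\widetilde B\cup\widetilde C$, the second item gives $\widehat x\neq\widehat y$. If both lie in $\widetilde E$, they already share the set $\widetilde E\in\rc'$.

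Everything thus reduces to points of $\widetilde D$, handled via the key claim: \emph{for $r_k-1\le m<q_{k+3}$, every $x\in T^{-m}J$ and $x'\in T^{-m}K=T^{-(m+q_{k+2})}I$ satisfy $\widehat x=\widehat{x'}$.} Granting it, let $x\in T^{-m}J\subset\widetilde D$ with $r_k-1\le m<q_{k+3}$, and let $y$ lie in a set of $\rc'$ other than $T^{-m}(J\cup K)$. Pick $x'\in T^{-m}K$; then $\widehat{x'}=\widehat x$, and $x'$ lies in the same set $T^{-m}(J\cup K)$ of $\rc'$ as $x$, so $y$ lies in a set of $\rc'$ other than the one containing $x'$. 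Since $x'\in\widetilde A\cup\widetilde B\cup\widetilde C$, the cases of the previous paragraph give $\widehat{x'}\neq\widehat y$ whenever $y\notin\widetilde D$; and if $y\in\widetilde D$, say $y\in T^{-m'}J$ with $m'\neq m$, we likewise replace $y$ by $y'\in T^{-m'}K=T^{-(m'+q_{k+2})}I$, and since $m+q_{k+2}\neq m'+q_{k+2}$ both lie in $[0,r_{k+3})$, the first item of the previous lemma gives $\widehat{x'}\neq\widehat{y'}$. In all cases $\widehat x=\widehat{x'}\neq\widehat y$, which is what was needed.

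It remains to prove the key claim, which is the only substantial point. Exactly as in the proof of Lemma \ref{lem:E} — whose derivation of the shape of $\widehat x$ uses only $0\le m<q_{k+3}$ and not $m<r_k-1$ — one has $\widehat x=w''[q_{k+3}-m-1,q_{k+3})\,z[0,r_{k+3}-m-1)$; and Lemma \ref{lem:ABC}, applicable because $m+q_{k+2}\in[0,r_{k+3})$, gives $\widehat{x'}=z[q_{k+3}-m-1,r_{k+3})\,z[0,q_{k+3}-m-1)$. The two words agree on their first $m+1$ letters (both equal to $z[q_{k+3}-m-1,q_{k+3})$, because $w''$ is a prefix of $z$); on the remaining $r_{k+3}-m-1$ letters, $\widehat x$ reads $z[0,r_{k+3}-m-1)$ while $\widehat{x'}$ reads $z[q_{k+3},r_{k+3})\,z[0,q_{k+3}-m-1)=w'\,z[0,q_{k+3}-m-1)$. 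Since $w'=z[0,q_{k+2})$, equality of these tails reduces to $z_{q_{k+2}+i}=z_i$ for $0\le i<q_{k+3}-m-1$, i.e.\ to $[q_{k+2},\,q_{k+2}+q_{k+3}-m-1)\subseteq Per(z)$. As $m\ge r_k-1$ we get $q_{k+2}+q_{k+3}-m-1\le r_{k+3}-r_k=|z|-|v|-|u|$, and $\NN\cap[0,|z|-|v|-|u|)\subseteq Per(z)$ was established just before Lemma \ref{lem:ABC}; hence the inclusion holds and the claim follows. I expect this last length estimate — which is exactly what makes $z$ long enough to absorb the shift by $|w'|=q_{k+2}$ on the required range, so that the $J$/$K$ distinction vanishes inside a single cell of $\rc'$ — to be the main obstacle; the rest is bookkeeping with the two complementary towers.
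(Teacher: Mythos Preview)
Your argument is correct and its overall organisation matches the paper's: reduce the corollary to the separation of $\rc$-names on $\widetilde A\cup\widetilde B\cup\widetilde C$ versus $\widetilde E$ (which is exactly the content of the preceding lemma), and then handle $\widetilde D$ via the key claim that $T^{-m}J$ and $T^{-m}K$ carry the same $\rc$-name of length $r_{k+3}$ for $r_k-1\le m<q_{k+3}$. Where you diverge from the paper is in the proof of that key claim. You compute both names explicitly, show they agree on the first $m+1$ letters, and reduce equality of the tails to the inclusion $[q_{k+2},\,r_{k+3}-m-1)\subseteq Per(z)$, which follows from $m\ge r_k-1$ and the bound $[0,|z|-|u|-|v|)\subseteq Per(z)$ established earlier. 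The paper instead dispatches the claim in one line: since $\rc<\pc^{r_k-1}$ one has $\rc^{r_{k+3}}<\pc^{r_{k+3}+r_k-2}$, and for $m\ge r_k-1$ the interval $T^{-m}I_{k+2}=T^{-m}J\cup T^{-m}K$ is a single cell of $\pc^{r_{k+3}+r_k-2}$, hence lies in a single cell of the coarser partition $\rc^{r_{k+3}}$. Your route is self-contained and makes the role of the threshold $m\ge r_k-1$ explicit at the level of words; the paper's route is shorter and exploits the global coarseness hypothesis on $\rc$ directly, without any further computation.
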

\begin{proof}
  The previous lemma shows that
$$\rc^{r_{k+3}}|\widetilde X\setminus \widetilde D>\{\widetilde E\}\cup A\cup B\cup C.$$
Since $\rc^{r_{k+3}}<\pc^{r_{k+3}+r_k-2}$, the points in $\widetilde
D$ have the same $\rc$-names of length $r_{k+3}$ as the points from
the same level in $\widetilde C$. More precisely, if $x\in T^{-j}J$,
$r_k-1\leq j<q_{k+3}$, then $\widehat x=\widehat y$ for any $y\in
T^{-m}I_{k+2}$, in particular, $\widehat x=\widehat y$ for any $y\in
T^{-j}K$.
\end{proof}

  \begin{pro}
    $\rc^{r_{k+3}+r_k-1}=\pc^{r_{k+3}+2r_k-3}$.
  \end{pro}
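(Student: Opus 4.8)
The plan is to prove the two inclusions of $\rc^{r_{k+3}+r_k-1}=\pc^{r_{k+3}+2r_k-3}$ separately. The inclusion $\rc^{r_{k+3}+r_k-1}<\pc^{r_{k+3}+2r_k-3}$ is immediate: taking $m$-th refinements preserves the refinement order, so the standing assumption $\rc<\pc^{r_k-1}$ gives $\rc^{m}<(\pc^{r_k-1})^{m}=\pc^{m+r_k-2}$ for every $m\in\NN$, and the choice $m=r_{k+3}+r_k-1$ yields it.

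For the reverse inclusion I would reduce to the corollary just proved. From $\rc^{n}=\bigvee_{i=0}^{n-1}T^{-i}\rc$ one gets the identity $\rc^{r_{k+3}+r_k-1}=\rc^{r_{k+3}}\vee T^{-(r_k-1)}\rc^{r_{k+3}}$, because the index sets $\{0,\dots,r_{k+3}-1\}$ and $\{r_k-1,\dots,r_{k+3}+r_k-2\}$ cover $\{0,\dots,r_{k+3}+r_k-2\}$ (here $r_k-1\le r_{k+3}-1$, since $(r_j)$ is increasing). By the corollary $\rc^{r_{k+3}}>\rc'$, hence $T^{-(r_k-1)}\rc^{r_{k+3}}>T^{-(r_k-1)}\rc'$, and therefore $\rc^{r_{k+3}+r_k-1}>\rc'\vee T^{-(r_k-1)}\rc'$. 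It thus suffices to prove $\rc'\vee T^{-(r_k-1)}\rc'=\pc^{r_{k+3}+2r_k-3}$, which is exactly the situation depicted in Figure~\ref{fig:rcprime}.

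To establish this last identity -- the heart of the argument -- I would use that $\rc'$ arises from $\pc^{r_{k+3}+r_k-2}$ by replacing the $r_k-1$ cells $T^{-m}J$, $0\le m\le r_k-2$ (the levels of the Rokhlin tower $E$), by their union $\widetilde E$, leaving every other cell untouched; in particular $\rc'$ and $\pc^{r_{k+3}+r_k-2}$ share all cutpoints, so $\cc(\rc')\cup\cc(T^{-(r_k-1)}\rc')=\{0,\dots,r_{k+3}+2r_k-3\}=\cc(\pc^{r_{k+3}+2r_k-3})$. Since $q_{k+3}\ge q_{k+2}+q_{k+1}\ge 2r_k>2r_k-2$, the tower $\Lambda(J,0,2r_k-2)$ is genuine, so $\widetilde E$ and $T^{-(r_k-1)}\widetilde E$ are disjoint; hence on $\widetilde E$ the partition $T^{-(r_k-1)}\rc'$ agrees with $T^{-(r_k-1)}\pc^{r_{k+3}+r_k-2}$, symmetrically $\rc'$ agrees with $\pc^{r_{k+3}+r_k-2}$ on $T^{-(r_k-1)}\widetilde E$, and off $\widetilde E\cup T^{-(r_k-1)}\widetilde E$ both partitions already equal the corresponding refinement of $\pc$. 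Using $\pc^{r_{k+3}+2r_k-3}=\pc^{r_{k+3}+r_k-2}\vee T^{-(r_k-1)}\pc^{r_{k+3}+r_k-2}$, a short case check then finishes it: two points in distinct cells of $\pc^{r_{k+3}+2r_k-3}$ are separated by $\pc^{r_{k+3}+r_k-2}$ or by $T^{-(r_k-1)}\pc^{r_{k+3}+r_k-2}$, hence by $\rc'$ unless both lie in $\widetilde E$ (where $T^{-(r_k-1)}\rc'$ separates them), and symmetrically; so $\rc'\vee T^{-(r_k-1)}\rc'>\pc^{r_{k+3}+2r_k-3}$, and with the easy inclusion we get equality. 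Then $\pc^{r_{k+3}+2r_k-3}=\rc'\vee T^{-(r_k-1)}\rc'<\rc^{r_{k+3}+r_k-1}<\pc^{r_{k+3}+2r_k-3}$ forces the desired equality.

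I expect the main obstacle to be precisely this last step: making rigorous that the only coarsening performed by $\rc'$ is the fusion of $E$ into $\widetilde E$, verifying the disjointness $\widetilde E\cap T^{-(r_k-1)}\widetilde E=\emptyset$, and checking in the case analysis that no cell of $T^{-(r_k-1)}\rc'$ meeting $\widetilde E$ can straddle it so as to merge two points lying in distinct cells of $\pc^{r_{k+3}+2r_k-3}$. These are geometric facts about the positions of the preimages $\lra{i}$, resting on the explicit tower structure of Proposition~\ref{pro:twolength} and on $J$ being a single interval with endpoints among the $\lra{i}$; everything else is bookkeeping with the index-interval identities for refinements.
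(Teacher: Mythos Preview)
Your proposal is correct and follows essentially the same route as the paper: the easy direction via $\rc<\pc^{r_k-1}$, then the chain
\[
\rc^{r_{k+3}+r_k-1}\;\ge\;\rc^{r_{k+3}}\vee T^{-(r_k-1)}\rc^{r_{k+3}}\;\ge\;\rc'\vee T^{-(r_k-1)}\rc'\;\ge\;\pc^{r_{k+3}+2r_k-3},
\]
with the last inequality resting on the disjointness of $\widetilde E$ and $T^{-(r_k-1)}\widetilde E$ and on the fact that, on each of these two ``bad'' regions, the \emph{other} partition in the join already separates the levels. The paper compresses this last step into the single observation that $T^{-(r_k-1)}\widetilde E\subset\widetilde D\cup\widetilde C$, where $\rc'$ has the levels $T^{-m}I_{k+2}$ as cells; your case analysis unfolds the same content. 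The geometric fact you flag as the main obstacle---that no interval cell of $T^{-(r_k-1)}\rc'$ can meet two components $T^{-m}J$ of $\widetilde E$---is exactly what is needed, and it follows because each $T^{-s}J$ ($0\le s\le r_k-2$) sits inside the single cell $T^{-(q_{k+3}+s)}I_{k+2}$ of $T^{-(r_k-1)}\rc'$ (equivalently, by applying $T^{r_k-1}$, because $T^{-(r_k-1)}\widetilde E$ lies in the region of the tall tower where $\rc'$ already separates levels).
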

  \begin{proof}
    Since $\rc<\pc^{r_k-1}$,
    $$\rc^{r_{k+3}+r_k-1}<\left(\pc^{r_k-1}\right)^{r_{k+3}+r_k}=\pc^{r_{k+3}+2r_k-3}.$$
    The opposite inequality arises as follows:
    \begin{align*}
\rc^{r_{k+3}+r_k-1}&\quad=\quad\left(\rc^{r_{k+3}}\right)^{r_k}\quad >\quad \rc^{r_{k+3}}\vee T^{-(r_k-1)}\rc^{r_{k+3}}\\
&\quad>\quad\rc'\vee T^{-(r_k-1)}\rc'\quad>\quad \pc^{r_{k+3}+2r_k-3}.
\end{align*}
The first inequality is obvious, the second holds by the previous
lemma. The last inequality follows from the fact, that
$T^{-(r_k-1)}\widetilde E$ is a subset of the union $(\widetilde
D\cup\widetilde C)$, where partition $\rc'$ separates each level, see
Figure \ref{fig:rcprime}.
\end{proof}

\begin{rmk}\label{rmk:Rauzy_Bratelli}
  We use the concept of refining Rokhlin towers over the intervals $I_n$ for $n=k-1,k,\ldots,k+3$ to find $\rc$-names of suitable lengths for the points from the unit interval.
This approach is based on the same ideas that are behind two classical concepts: Rauzy induction and Bratelli-Vershik diagrams.
Both notions were originally introduced as a multiscale description of ``low dimensional'' dynamics (not only the rotation), nevertheless, they can be adapted to provide a method for generating the coding of the trajectories with respect to a certain partition.
Rauzy induction for rotations is used e.g. in \cite{AFH99}, Bratelli-Vershik diagrams for rotations is described in \cite{DDM00}.
For reader's convenience, let us briefly explain how is our approach related to Rauzy induction.

Rauzy induction is based on the induction on a certain set and localization of points by their times of the first entrance to this set.
In this article, we in fact perform a Rauzy induction by inducing the original map $T$ on the set $J = I_n \cup I_{n+1}$ for $n \geq k$, where $k$ is the integer fixed in the beginning of section~\ref{sec:proof-pro-rk-1}.
By using the concept of Rokhlin towers, we do not recover a new two-interval exchange transformation as in Rauzy induction, but we get a pair of towers where the dynamics on their basis corresponds to the dynamics obtained by Rauzy induction.
The main difference of the two approaches is that the described Rauzy induction (and its recursive repeating) allows us to easily determine $\rc$-names of the points of the basis of the new towers, i.e., of interval $J$, while our approach gives easily the $\rc$-names of the points from the tops of the new towers, i.e., from the set $T^{-q_{n+1}+1}(I_n) \cup T^{-q_n+1}(I_{n+1})$ (see lemma~\ref{lem:itercode}).

\end{rmk}


\section{Symbolic Dynamics}
\label{sec:subshifts}
In this section, we rephrase our main results in the terms of Sturmian
subshifts and related sliding block codes.
We use the fact that a Sturmian subshift derived from the
rotation by an angle $\alpha$ arises as coding of the rotation with
respect to the partition $\pc$. We also show that in other subshifts
the analogous proposition need not hold.

\subsection{Sturmian subshifts}
\label{sec:sturmian-subshifts}
A \emph{Sturmian subshift} $(\Gamma,S)$ is the coding of the rotation
with respect to the partition $\pc$, i.e.,
$$\Gamma=\overline{\Phi_{\pc}(X)}\subset\{0,1\}^\NN.$$ 

The topology on $\Gamma$ is generated by the sets
$$[u]_\ell=\{(x_i)_{i\in\NN}\in \Gamma \mid x_{i+\ell}=u_i\text{ for every } 0\leq i<|u| \},\qquad u\in\{0,1\}^*.$$
The set $[u]_\ell$, if it is nonempty, is called a \emph{cylinder} of
length $n$ shifted by $\ell$.
The partition of cylinders of length $n$ shifted by $\ell$ is defined
as follows:
$$\left[\Sigma^n\right]_\ell=\{[u]_\ell \mid  u\in \Sigma^n\}\setminus\{\emptyset\}
.$$

The inverse mapping $\Phi^{-1}_\pc$, applied as a set function on the
subsets of $\Gamma$, has the following properties:
\begin{enumerate}
\item For every $\ell,n\in\NN$, the mapping sends $[\Sigma^n]_\ell$
  bijectively onto $T^{-\ell}\pc^n$, i.e.,
$$
\Phi^{-1}_\pc \left ( [u]_\ell \right ) = T^{-\ell} P_u
=\bigcap^{n-1+\ell}_{i= \ell}T^{-i}P_{u_i},\quad \text{ for every } u
\in\Sigma^n.
$$   
\item The mapping preserves the relation ``to be rougher than'', i.e.,
  if $\rc<\rc'$ for partitions of $\Sigma$, then
  $\Phi^{-1}_\pc(\rc)<\Phi^{-1}_\pc (\rc')$.
\end{enumerate}

It follows immediately that the following results hold.
\begin{pro}\label{pro:main-sturm}
  If $n\in\NN$ and $\rc$ is a nontrivial partition rougher than
  $\left[\Sigma^n\right]_0$, then there exist $k,\ell,m\in\NN$ such
  that $\ell<n$ and
$$ \rc^k=\left[\Sigma^n\right]_\ell.$$
\end{pro}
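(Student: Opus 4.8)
The plan is to transport Theorem~\ref{thm:mainresult} across the bijection $\Phi^{-1}_\pc$. First I would observe that the two listed properties of the set function $\Phi^{-1}_\pc$ say precisely that it is a lattice isomorphism from the finite Boolean algebra generated by the cylinders (of any fixed shift) onto the algebra of Sturmian-measurable sets, and moreover that it intertwines the shift $S$ with the rotation $T$ in the sense that $\Phi^{-1}_\pc([\Sigma^n]_\ell)=T^{-\ell}\pc^n$. In particular $\Phi^{-1}_\pc([\Sigma^n]_0)=\pc^n$, and the hypothesis that $\rc$ is a nontrivial partition of $\Gamma$ rougher than $[\Sigma^n]_0$ translates, via property~(2), into the statement that the partition $\rc^\ast:=\Phi^{-1}_\pc(\rc)$ of $X$ is nontrivial (since $\Phi^{-1}_\pc$ is injective on nonempty sets, distinct blocks stay distinct) and rougher than $\pc^n$.

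Next I would apply Theorem~\ref{thm:mainresult} to $\rc^\ast$: there exist $k,\ell,m\in\NN$ with $\ell<n$ such that $(\rc^\ast)^k=(T^{-\ell}\pc)^m=T^{-\ell}\pc^m$. The one genuine point that needs care is that $\Phi^{-1}_\pc$ commutes with the refinement operation $(\cdot)^k$, i.e.\ $\Phi^{-1}_\pc(\rc^k)=(\rc^\ast)^k$. This follows because $\rc^k=\bigvee_{i=0}^{k-1}S^{-i}\rc$ on the subshift side, $(\rc^\ast)^k=\bigvee_{i=0}^{k-1}T^{-i}\rc^\ast$ on the rotation side, and $\Phi^{-1}_\pc$ turns $S^{-i}$ into $T^{-i}$ (from the intertwining $\Phi_\pc\circ T=S\circ\Phi_\pc$) and turns joins into joins (again property~(2), applied to show $\Phi^{-1}_\pc$ preserves both $\vee$-order directions, hence the supremum). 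Here one should note a subtlety that makes this clean rather than delicate: on the subshift one works with cylinders $[u]_i$ shifted by $i$, and $S^{-i}[\Sigma^n]_0=[\Sigma^n]_i$, whose $\Phi^{-1}_\pc$-image is exactly $T^{-i}\pc^n$; since $\rc$ is built from blocks over $[\Sigma^n]_0$, all the shifted copies $S^{-i}\rc$ live in a common finite algebra (that of $[\Sigma^{n+k-1}]_0$) on which $\Phi^{-1}_\pc$ is a bona fide algebra isomorphism, so no infinitary issues arise.

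Combining these two facts gives
$$\Phi^{-1}_\pc\!\left(\rc^k\right)=\left(\rc^\ast\right)^k=T^{-\ell}\pc^m=\Phi^{-1}_\pc\!\left(\left[\Sigma^m\right]_\ell\right),$$
and since $\Phi^{-1}_\pc$ is injective as a map on partitions (it is a bijection on nonempty sets and sends a partition to a partition), we conclude $\rc^k=[\Sigma^m]_\ell$ with $\ell<n$, which is the assertion. (The proposition as stated writes $[\Sigma^n]_\ell$; this should read $[\Sigma^m]_\ell$ for the $m$ supplied by Theorem~\ref{thm:mainresult}, matching the form of Proposition~\ref{pro:main-sturm}.)

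I do not expect a serious obstacle here: the whole content of the proposition is the dictionary between Sturmian-measurable partitions of the circle and block-defined partitions of the Sturmian subshift, and once one checks that $\Phi^{-1}_\pc$ respects $\vee$, $S^{-i}\leftrightarrow T^{-i}$, and injectivity on partitions, the result is an immediate pushforward of Theorem~\ref{thm:mainresult}. The only place demanding a line of justification is the commutation $\Phi^{-1}_\pc(\rc^k)=(\rc^\ast)^k$, and even that is routine given the two enumerated properties of $\Phi^{-1}_\pc$ together with $\Phi_\pc\circ T=S\circ\Phi_\pc$; the mild care needed is simply to confine the argument to a single large finite algebra so that ``rougher than'' and joins behave as in a Boolean algebra isomorphism rather than merely as a monotone map.
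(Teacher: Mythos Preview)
Your proposal is correct and is exactly the approach the paper takes: the paper simply says ``It follows immediately that the following results hold'' after listing the two properties of $\Phi^{-1}_\pc$, so what you have written is a fleshed-out version of that one-line reduction to Theorem~\ref{thm:mainresult}. Your parenthetical remark is also right: the conclusion should read $[\Sigma^m]_\ell$ rather than $[\Sigma^n]_\ell$, matching the $m$ supplied by Theorem~\ref{thm:mainresult}.
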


\begin{pro}\label{pro:precise-sturm}
  Let $\rc$ be a nontrivial partition rougher than
  $\left[\Sigma^n\right]_\ell$ for some $\ell,n\in\NN$. Take $\ell$
  the largest and $n$ the least possible to satisfy the assumption. If
  $k \in\NN$ such that $r_{k-1}\leq n<r_k$, then
$$\rc^{r_{k+3}+2r_k-n-2}=\left[\Sigma^{r_{k+3}+2r_k-3}\right]_\ell.$$
\end{pro}

These results can be also reformulated in terms of sliding block
codes.

\subsection{Sliding block codes}
\label{sec:block}

In this section we mainly follow the terminology from \cite{Ku03-2}
and \cite{LM95}. Given a subshift $(\Gamma,S)$ and $m \in \NN$, the
\emph{language of $\Gamma$ of length $m$} is the set of words defined
as follows:
$$\lc^m(\Gamma)=\{u[k,k+m) \mid u\in\Gamma, k\in\NN\}.$$

For positive integers $m$ and $n$ and a mapping $\psi$ from
$\lc^m(\Gamma)$ to a finite set $\Delta$, we denote by $\psi^{*n}$ the
mapping from $\lc^{m+n-1}(\Gamma)$ to $\Delta^n$ defined by the
equality
$$(\psi^{*n}(u))_i=\psi(u_iu_{i+1}\ldots u_{i+m-1}),\qquad 0\leq i<n, u\in\lc^{m+n-1}(\Gamma).$$
The mapping $\psi$ is called a \emph{local rule of width $m$} and
$\psi^{*n}$ is called the \emph{sliding block code of length $n$
  induced by $\psi$}. In the same way, the mapping from $\Gamma$ to
$\Delta^\NN$ defined by the
equality $$(\psi^{*\infty}(u))_i=\psi(u_iu_{i+1}\ldots
u_{i+m-1}),\qquad 0\leq i<n, u\in\Gamma,$$ is the \emph{infinite
  sliding block code induced by $\psi$}.

A \emph{homomorphism} from a subshift $(\Gamma,S)$ to a shift
$(\Delta^\NN,S)$ is any continuous mapping $f:\Gamma\to\Delta^\NN$
that commutes with shift mappings, i.e., $f\circ S=S\circ f$. Since
the spaces $\Gamma$ and $\Delta^\NN$ are compact, every homomorphism
$f$ is uniformly continuous and it is therefore equal to the infinite
sliding block code $\psi^{*\infty}$ for some local rule $\psi$.

The main problem of this section is the relation of the injectivity of
a local rule and the injectivity of the induced homomorphism.  The
following lemma shows that one direction follows from the definitions.

\begin{lem}\label{lem:injective_infinite}
  Let $(\Gamma,S)$ and $(\Delta^\NN,S)$ be subshifts,
  $\psi:\lc^m(\Gamma)\to\Delta$ be a local rule.  If for some
  $n\in\NN$ the sliding block code $\psi^{*n}$ is injective, then the
  sliding block code $\psi^{*(n+\ell)}$ is injective for every $\ell
  \in\NN$ and the infinite sliding block code $\psi^{*\infty}$ is
  injective.
\end{lem}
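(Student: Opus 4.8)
The plan is to prove the two conclusions separately: first, that injectivity of $\psi^{*n}$ forces injectivity of $\psi^{*(n+\ell)}$ for every $\ell\in\NN$, and then to deduce injectivity of $\psi^{*\infty}$ by approximating points of $\Gamma$ by ever longer prefixes. For brevity I would set $N=m+n-1$, the width of the map $\psi^{*n}\colon\lc^{N}(\Gamma)\to\Delta^n$; note that the domain of $\psi^{*(n+\ell)}$ is exactly $\lc^{N+\ell}(\Gamma)$.

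For the finite step I would argue by an overlapping-window computation. Suppose $u,u'\in\lc^{N+\ell}(\Gamma)$ satisfy $\psi^{*(n+\ell)}(u)=\psi^{*(n+\ell)}(u')$. For each $j$ with $0\le j\le\ell$ the factor $u[j,j+N)$ has length $N$ and is a factor of $u$, hence a factor of some point of $\Gamma$; therefore it belongs to $\lc^{N}(\Gamma)$, and likewise $u'[j,j+N)\in\lc^{N}(\Gamma)$. Unwinding the definitions of $\psi^{*(n+\ell)}$ and $\psi^{*n}$ one sees that $\psi^{*n}\bigl(u[j,j+N)\bigr)$ is precisely the block $\bigl(\psi^{*(n+\ell)}(u)\bigr)[j,j+n)$, and the same for $u'$; since the two images coincide, so do these blocks, and injectivity of $\psi^{*n}$ then gives $u[j,j+N)=u'[j,j+N)$ for every $0\le j\le\ell$. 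As $j$ runs through $0,\ldots,\ell$ these windows together cover all positions $0,\ldots,N+\ell-1$ of $u$ (consecutive windows overlap in $N-1\ge 0$ letters), so $u=u'$. Hence $\psi^{*(n+\ell)}$ is injective.

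For the infinite step I would take $x,x'\in\Gamma$ with $\psi^{*\infty}(x)=\psi^{*\infty}(x')$ and, for each $\ell\in\NN$, consider the prefixes $x[0,N+\ell)$ and $x'[0,N+\ell)$, which lie in $\lc^{N+\ell}(\Gamma)$. From the definition of the infinite sliding block code, $\psi^{*(n+\ell)}(x[0,N+\ell))=\bigl(\psi^{*\infty}(x)\bigr)[0,n+\ell)=\bigl(\psi^{*\infty}(x')\bigr)[0,n+\ell)=\psi^{*(n+\ell)}(x'[0,N+\ell))$, so by the finite step already proved $x[0,N+\ell)=x'[0,N+\ell)$. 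Letting $\ell\to\infty$ yields $x=x'$, so $\psi^{*\infty}$ is injective. (Alternatively one could invoke compactness of $\Gamma$ and $\Delta^\NN$, but the prefix argument is self-contained.)

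I do not expect a genuine obstacle here; the argument is essentially index bookkeeping. The one point I would take care not to skip is the verification that each window $u[j,j+N)$ really lies in $\lc^{N}(\Gamma)$, the domain on which $\psi^{*n}$ is defined and assumed injective — this is immediate from the fact that a language of the form $\lc^{N}(\Gamma)$ is closed under passing to factors.
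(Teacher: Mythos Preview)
Your proof is correct and follows essentially the same overlapping-window argument as the paper: slide the length-$N$ window across positions $0,\ldots,\ell$, apply injectivity of $\psi^{*n}$ at each position, and use that the windows cover the whole word. You give more detail than the paper (which merely says the infinite case is ``analogous''), and your explicit check that each $u[j,j+N)$ lies in $\lc^{N}(\Gamma)$ is a nice point the paper glosses over.
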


\begin{proof}
  Let $\psi:\lc^m(\Gamma)\to\Delta$ be a local rule and $n$ be a
  natural number such that $\psi^{*n}$ is injective.

  Given $\ell \in\NN$, suppose that words $u$ and $v$ from
  $\lc^{m+n+\ell-1}(\Gamma)$ have the same image under
  $\psi^{*(n+\ell)}$ which we denote by $w$. Then for every $i\leq
  \ell$,
$$\psi^{*n}(u[i,i+m+n-1))=w[i,i+n)=\psi^{*n}(v[i,i+m+n-1)).$$
Assuming injectivity of $\psi^{*n}$, we get that the words
$u[i,i+m+n-1)$ and $v[i,i+m+n-1)$ are the same for every $i\leq
\ell$. It implies that $u=v$. This proves that $\psi^{*(n+l)}$
is also injective.

The proof of injectivity of $\psi^{*\infty}$ is analogous.
\end{proof}

Let us emphasize the part of the lemma which says that if a sliding
block code of some finite length is injective, then the infinite sliding block
code is injective too. A natural question is
whether the converse holds.

If the subshift $(\Gamma,S)$ is finite, i.e., $\Gamma$ is finite, then
the situation is simple and the answer is affirmative. In the infinite
case, an important role is played by the minimality and the dependence
of the local rule on the first coordinate.

For every positive natural number $n\geq 2$, denote by $g_n$ the
mapping from $\lc^{n}(\Gamma)$ to $\lc^{n-1}(\Gamma)$ defined as the
\emph{cut-off of the last letter}, i.e., $g(x)=x[0,n-1)$. We say that
a local rule $\psi$ from $\lc^m(\Gamma)$ to $\Delta$ is \emph{minimal}
if either $m=1$, or there is no local rule $\psi'$ from
$\lc^{m-1}(\Gamma)$ to $\Delta$ satisfying the condition:
$\psi'=\psi\circ g_m$, i.e., $\psi'(u)=\psi(u[0,m-1))$ for every
$u\in\lc^m(\Gamma)$.  A local rule
$\psi:\lc^m(\Gamma)\to\lc^1(\Gamma)$ \emph{ignores the first letter}
if there exists a mapping $\psi':\lc^{m-1}(\Gamma)\to\lc^1(\Gamma)$
such that $\psi=\psi'\circ S$, i.e., $\psi(u)$ equals $\psi'(u[1,m))$,
for every $u\in\lc^m(\Gamma)$.

A rule ignoring the first letter induces sliding block codes which
ignores the first letter as well. In other words, if $\psi=\psi'\circ
S$, then for every $n\in\NN$ we have $\psi^{*n}=(\psi')^{*n}\circ S$
and $\psi^{*\infty}=(\psi')^{*\infty}\circ S$. Since the mapping $S$
is not injective, the sliding block codes of all lengths and the infinite sliding block code are not injective either.

The next two lemmas show the important role of minimal local rules.
\begin{lem}
  If $\psi$ is a local rule of width $m$, then a local rule $\psi'$
  of width $m'$ such that $m'\leq m$ induces the same homomorphism as $\psi$ if
  and only if
$$\psi'(u[0,m'))=\psi(u), \text{ for every $u\in\lc^m(\Gamma)$}.$$

In particular, given a local rule $\psi$, there exists just one
minimal local rule $\psi'$ of a smaller or equal width that induces
the same homomorphism. The mapping $\psi'$ is of minimal width among
all mappings inducing the same homomorphism as $\psi$.
\end{lem}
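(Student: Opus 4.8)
The plan is to establish the equivalence first; the ``in particular'' clause then follows by a short minimality argument. I will use repeatedly that whenever $m'\le m$, every $v\in\lc^{m'}(\Gamma)$ extends to some $u\in\lc^m(\Gamma)$ with $u[0,m')=v$: writing $v=x[k,k+m')$ for some $x\in\Gamma$, the word $u:=x[k,k+m)$ works.

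First I would prove the direction $(\Leftarrow)$. Assuming $\psi'(u[0,m'))=\psi(u)$ for all $u\in\lc^m(\Gamma)$, fix $x\in\Gamma$ and $i\in\NN$ and apply the hypothesis to $u=x[i,i+m)$, noting $u[0,m')=x[i,i+m')$; this gives
$$\big(\psi^{*\infty}(x)\big)_i=\psi\big(x[i,i+m)\big)=\psi'\big(x[i,i+m')\big)=\big((\psi')^{*\infty}(x)\big)_i ,$$
so $\psi^{*\infty}=(\psi')^{*\infty}$. For $(\Rightarrow)$, assume the two infinite codes agree and let $u\in\lc^m(\Gamma)$, say $u=x[k,k+m)$. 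Since $\Gamma$ is $S$-invariant, $y:=S^k x\in\Gamma$ and $y[0,m)=u$; reading off coordinate $0$ of $\psi^{*\infty}(y)=(\psi')^{*\infty}(y)$ yields $\psi(u)=\psi(y[0,m))=\psi'(y[0,m'))=\psi'(u[0,m'))$, as required.

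For the ``in particular'' part I would let $m_{0}$ be the least $m'\in\{1,\dots,m\}$ for which $\psi(u)$ depends only on $u[0,m')$ as $u$ ranges over $\lc^m(\Gamma)$ (such $m'$ exists, since $m'=m$ works), and define $\psi_{0}\colon\lc^{m_{0}}(\Gamma)\to\Delta$ by $\psi_{0}(v)=\psi(u)$ for any $u\in\lc^m(\Gamma)$ with $u[0,m_{0})=v$ (well defined by the choice of $m_{0}$ and the extension remark). By the equivalence just proved, $\psi_{0}$ induces the same homomorphism as $\psi$. It is minimal: if $m_{0}=1$ this is by definition, and if $m_{0}\ge 2$ and $\psi_{0}=\chi\circ g_{m_{0}}$ for some rule $\chi$ of width $m_{0}-1$, then $\psi(u)=\chi(u[0,m_{0}-1))$ on $\lc^m(\Gamma)$, contradicting minimality of $m_{0}$. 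For uniqueness, if $\psi'$ is any minimal rule of width $m'\le m$ inducing the same homomorphism, the equivalence gives $\psi'(u[0,m'))=\psi(u)$ on $\lc^m(\Gamma)$; hence $\psi(u)$ depends only on $u[0,m')$, so $m'\ge m_{0}$, and if $m'>m_{0}$ then $\psi'(v)=\psi_{0}(v[0,m_{0}))$ for all $v\in\lc^{m'}(\Gamma)$, exhibiting $\psi'$ as factoring through $g_{m'}$ and contradicting its minimality; thus $m'=m_{0}$ and $\psi'=\psi_{0}$. Finally, $\psi_{0}$ has the smallest width among all rules inducing the homomorphism: a rule of width $m''\ge m$ has $m''\ge m\ge m_{0}$ trivially, while one of width $m''<m$ satisfies $\psi''(u[0,m''))=\psi(u)$ on $\lc^m(\Gamma)$ by the equivalence, forcing $m''\ge m_{0}$.

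I do not expect a serious obstacle; the argument is essentially formal once the equivalence is in place. The one point to be careful about is that ``$\psi$ depends only on $u[0,m')$'' must be read with $u$ ranging over $\lc^m(\Gamma)$ only — this is precisely what makes $\psi_{0}$ well defined — and that all truncations and extensions performed stay within the languages $\lc^{j}(\Gamma)$; beyond $S$-invariance of $\Gamma$ no topology or dynamics is used.
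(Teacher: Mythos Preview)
Your proof is correct and complete. The paper itself gives no argument beyond the sentence ``The proof is straightforward,'' so you have simply supplied the routine verification that the authors omitted; there is nothing to compare.
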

The proof is straightforward.

\begin{lem}
  Let $\Gamma$ be infinite, $\psi:\lc^m(\Gamma)\to\Delta$ be a local
  rule of width $m$.  If a sliding block code of finite length induced by $\psi$
  is injective, then $\psi$ is minimal.
\end{lem}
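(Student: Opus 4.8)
The plan is to prove the contrapositive: if $\psi$ is not minimal, then the sliding block code $\psi^{*n}$ fails to be injective for every $n\in\NN$. So assume $\psi$ is not minimal. By the definition of minimality this forces $m\geq 2$ and provides a local rule $\psi'$ of width $m-1$, i.e. $\psi':\lc^{m-1}(\Gamma)\to\Delta$, such that $\psi(u)=\psi'(u[0,m-1))$ for every $u\in\lc^m(\Gamma)$; informally, the value of $\psi$ does not depend on the last letter of its argument.

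Next I would simply unwind the definition of $\psi^{*n}$ for a fixed $n\geq 1$. For $u\in\lc^{m+n-1}(\Gamma)$ and $0\leq i<n$ we have $(\psi^{*n}(u))_i=\psi(u_iu_{i+1}\ldots u_{i+m-1})=\psi'(u_iu_{i+1}\ldots u_{i+m-2})$, which involves only the letters $u_i,\dots,u_{i+m-2}$; since $i\leq n-1$, the largest index occurring anywhere is $m+n-3$, so the last letter $u_{m+n-2}$ of $u$ is never read. Hence $\psi^{*n}(u)$ depends only on $u[0,m+n-2)$; writing $g_{m+n-1}\colon\lc^{m+n-1}(\Gamma)\to\lc^{m+n-2}(\Gamma)$ for the cut-off of the last letter, this says $\psi^{*n}=(\psi')^{*n}\circ g_{m+n-1}$.

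It then suffices to show that $g_{m+n-1}$ is not injective, and this is exactly the point where the hypothesis that $\Gamma$ is infinite enters. I would argue that for an infinite subshift the complexity function $j\mapsto|\lc^j(\Gamma)|$ is strictly increasing: each word of $\lc^j(\Gamma)$ is a prefix of some word of $\lc^{j+1}(\Gamma)$, so every $g_{j+1}$ is surjective; if in addition $g_{j+1}$ were injective for some $j$, then every word of length $j$ would have a unique one-letter right extension, and a short suffix argument would propagate this to all longer words, whence every $x\in\Gamma$ is determined by $x[0,j)$ and $\Gamma$ is finite, contradicting the hypothesis. Therefore $g_{m+n-1}$ is surjective but not injective, so two distinct words of $\lc^{m+n-1}(\Gamma)$ have the same image under $g_{m+n-1}$, hence the same image under $\psi^{*n}$, and $\psi^{*n}$ is not injective. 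Since $n$ was arbitrary, no sliding block code of finite length induced by $\psi$ is injective, which is the contrapositive of the claim.

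The only ingredient that is not pure bookkeeping with the definitions of $g_m$, minimality and $\psi^{*n}$ is the strict monotonicity of the complexity function for infinite subshifts; this is classical, but it is the step I expect to spend the most care on, since everything else is an immediate consequence of the factorization $\psi^{*n}=(\psi')^{*n}\circ g_{m+n-1}$.
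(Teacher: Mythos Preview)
Your proof is correct and follows exactly the same route as the paper: prove the contrapositive, factor $\psi^{*n}=(\psi')^{*n}\circ g_{m+n-1}$, and conclude from the non-injectivity of $g_{m+n-1}$ when $\Gamma$ is infinite. The paper simply asserts this last point (``the infiniteness of $\Gamma$ does not allow $g_{m+n-1}$ to be injective''), whereas you supply the standard Morse--Hedlund style argument; that extra care is fine and arguably an improvement, but the overall strategy is identical.
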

\begin{proof}
  Suppose that $\Gamma$ is infinite, $\psi$ is a non-minimal local
  rule of width $m$, $n$ is a positive natural number.  Since $\psi$
  is not minimal, there exists a local rule $\psi'$ of width $m-1$
  such that $\psi=\psi'\circ g_m$. It is readily seen that for every
  natural number $n\in\NN$ we have $\psi^{*n}=(\psi')^{*n}\circ
  g_{m+n-1}$. But the infiniteness of $\Gamma$ does not allow
  $g_{m+n-1}$ to be injective. Hence, $\psi^{*n}$ is not injective
  either.
\end{proof}

In accordance with the previous discussion, we can restrict the above mentioned question as follows:

\begin{qes} \label{qes:1}
  Let a minimal local rule $\psi$ do not ignore the first letter. Does
  the injectivity of the infinite  sliding block code induced by $\psi$ implies
  the injectivity of the sliding block code of length $n$ induced by $\psi$ for some
  finite $n\in\NN$?
\end{qes}

The answer to this question is negative: the following two examples, Example \ref{exm:symbolic} and \ref{exm:symbolic_minimal}, are counterexamples.
However, we will show that if we restrict ourselves to Sturmian subshifts, then the answer is positive (see Proposition \ref{pro:sturm-sliding} below).

\begin{exm}\label{exm:symbolic}
  Let $\Gamma=\{0,1\}^\NN$ and $\Delta=\{0,1,2\}$. Let
  $\psi:\lc^2(\Gamma)\to\Delta$ be the local rule defined as follows:
$$\psi(11)=0, \ \psi(10)=0, \ \psi(01)=1, \ \psi(00)=2. $$
This rule is minimal and does not ignore the first letter. Let us
remark that for every $x,y\in\lc^2(\Gamma)$, $\psi(x)=\psi(y)$ implies
$x[0,1)=y[0,1)$. By induction one can easily prove that for every
$n\in\NN$,
$$\psi^{*n}(x)=\psi^{*n}(y)\qquad\Longrightarrow\qquad x[0,n)=y[0,n).$$
Hence, $\psi^{*\infty}(x)=\psi^{*\infty}(y)$ implies $x=y$. We get
that the infinite sliding block code induced by $\psi$
is injective. On the other hand, for every $n\in\NN$, $x\in\{0,1\}^n$,
the words $x10$ and $x11$ belong to $\lc^{n+2}(\Gamma)$ and their
images under $\psi^{*(n+1)}$ are the same. It implies that no sliding
block code of finite length induced by $\psi$ is injective.
\end{exm}

The previous example is simple and instructive, however, it is quite far from Sturmian subshift, for which the answer is positive as already mentioned, from the point of view of subword complexity and minimality.
Therefore, we introduce the next example of a subshift which is minimal and of low subword complexity.

\begin{exm}\label{exm:symbolic_minimal}
  Let $\Gamma$ be the Toeplitz subshift generated by a Toeplitz sequence given as the limit of the following sequence of words $(u_n)$ determined by
$$u_1=00 \quad \quad \text{ and } \quad \quad u_{n+1}=u_n11u_n10u_n \quad \text{ for } n > 0.$$

Surely, all words $11$, $10$, $01$ and $00$ belong to the language of $\Gamma$. Hence, the local rule $\psi$ has the same domain as in Example \ref{exm:symbolic}. Therefore, the local rule is minimal and does not ignore the first letter. In addition, the words $u_n11$ and $u_n10$ belong to the language of $\Gamma$ and have the same image under the sliding block code of length $n+1$ induced by $\psi$. Thus, the sliding block code of finite length induced by $\psi$ is never injective.
\end{exm}

Let us remark that a Toeplitz sequence is of linear subword complexity, see \cite{Ca97}.
Hence, the examples show that injectivity of an infinite sliding block code (i.e. the factor mapping from a subshift) does not imply the existence of an injective sliding block code of finite length.

Before introducing the positive answer to Question \ref{qes:1} in the case of Sturmian subshifts we would like to discuss a closely related fact about Cantor primarility of the two-sided Sturmian subshift. The proof of this fact introduced in \cite{Du00} shows slightly more, namely that any non-trivial factor mapping from a two-sided Sturmian subshift to a dynamical system defined on a Cantor space is injective (so the map is a conjugacy). The proof can be adapted for one-sided Sturmian subshifts, but then a new technical assumption on the mapping appears. If we focus on the factor mappings to a subshift, the claim and its proof is as follows.

\begin{pro} \label{pro:Durand}
Let $\psi^{*\infty}$ be a homomorphism from a (one-sided) Sturmian subshift $(\Gamma,S)$ to a (one-sided) subshift $(\Delta^\NN,S)$ induced by a minimal local rule $\psi:\lc^m(\Gamma) \to\Delta$.
If $\psi$ does not ignore the first letter, then the two following conditions are equivalent:
\begin{itemize}
\item $\psi^{*\infty}$ is not constant,
\item $\psi^{*\infty}$ is injective.
\end{itemize}
\end{pro}

\begin{proof}
Since injectivity implies non-triviality, we only need to prove that there is no minimal rule $\psi$ such that $\psi$ does not ignore the first letter and $\psi^{*\infty}$ is neither constant, nor injective. Let us assume such a rule $\psi$ exists. We will now follow the ideas in the proof of Proposition 11 in \cite{Du00}. For a (one-sided) Sturmian subshift $(\Gamma,S)$ there is a factor mapping $\gamma$ from $(\Gamma,S)$ to the rotation of the circle $([0,1[,T)$, where $T(z)=(z+\alpha) \bmod{1}$, such that every point from the circle has a unique preimage except for the points $T^{-n}(0)$ with $n\in\NN$. Denote the set of these points by $\mathcal O$. The point $T^{-n}(0)$ has two preimages which we denote $x(n)$ and $y(n)$ in such a way that the conditions $S(x(n+1))=x(n)$ and $S(y(n+1))=y(n)$ hold for every $n$. The mapping $\psi^{*\infty}$ is not injective, therefore, there are distinct points $x_1$ and $x_2$ from $\Gamma$ with the same image. We consider two cases.

First case, $\gamma(x_1)=T^k(\gamma(x_2))$ for some $k\in\ZZ$. Without loss of generality, we assume that $k\geq 0$. By minimality, there is a sequence $(n_i)_{i=0}^{+ \infty}$ of nonnegative integers such that $S^{n_i}(x_1)$ and $S^{n_i}(x_2)$ converges to the respective limits $x_3$ and $x_4$, where $x_3\not\in\gamma^{-1}(\mathcal O)$. Since $\gamma(x_3)=\gamma(S^k(x_3))$, we get $x_3=S^kx_4$. Hence, $\psi^{*\infty}(x_4)=\psi^{*\infty}(x_3)=\psi^{*\infty}(S^kx_4)=S^k(\psi^{*\infty}(x_4))$. Since the Sturmian subshift has no factor with a periodic point, we get $k=0$ and $\gamma(x_1)=\gamma(x_2)$. Hence, without loss of generality, we can assume that $x_1=x(n)$ and $x_2=y(n)$ for some $n\in\NN$. We get 
$$\psi^{*\infty}(x(0))=\psi^{*\infty}(S^n(x(n)))=\psi^{*\infty}(S^n(y(n)))=\psi^{*\infty}(y(0)).$$ 
The assumption that $\psi$ does not ignore the first letter implies that the images of $x(0)$ and $y(0)$ under the map $\psi^{*\infty}$ must differ. It is a contradiction.

Second case, $\gamma(x_1)-\gamma(x_2) \bmod{1}$ does not belong to the both-sided orbit of $0$. Let us define the mapping $\eta$ from $[0,1[$ to $(\Delta^\NN,S)$ as follows: $\eta(z)=\psi^{*\infty}(x(n))$ if $z=T^{-n}(0)$ for some $n\in\NN$, and $\eta(z)=\psi^{*\infty}(\gamma^{-1}(z))$ otherwise. The map $\eta$ is continuous, whenever for every $n\in\NN$, $\psi^{*\infty}(x(n))=\psi^{*\infty}(y(n))$. Given $n\in\NN$, there are sequences $(m_i)_{i=0}^{+\infty}$ and $(n_i)_{i=0}^{+\infty}$ such that $S^{m_i}(x_1)$ converges to $x(n)$ and $S^{n_i}(x_1)$ converges to $y(n)$. We can suppose that $S^{m_i}(x_2)$ and $S^{n_i}(x_2)$ converges to the limits $x_3$ and $x_4$, respectively. We get
$$\gamma(x_3)-\gamma(x(n))=\gamma(x_1)-\gamma(x_2)=\gamma(x_4)-\gamma(y(n)) \quad \text{mod } 1.$$
Hence, $\gamma(x_3)$ does not belong to the both-sided orbit of $0$. Moreover, $\gamma(x(n))=\gamma(y(n))$. It implies that $x_3$ and $x_4$ coincide and 
$$\psi^{*\infty}(x(n))=\psi^{*\infty}(x_3)=\psi^{*\infty}(x_4)=\psi^{*\infty}(y(n)).$$  
Hence, $\eta$ is a non-constant continuous map from the unit circle into a totally disconnected space. This is a contradiction. 
\end{proof}

Next proposition considers injectivity of a finite sliding block code and gives a positive answer to Question \ref{qes:1}. 
Having proved the previous proposition and Lemma \ref{lem:injective_infinite}, a natural strategy of the proof of Proposition \ref{pro:sturm-sliding} would be to prove that (\ref{item:2}) implies (\ref{item:1}) (last two conditions are surely equivalent). Unfortunately, we did not find an easy way to prove it in this way. Instead, we apply Proposition \ref{pro:precise-sturm} to prove that (\ref{item:4}) implies (\ref{item:1}). As a byproduct, we obtain another proof of Proposition \ref{pro:Durand}. However, this proof is more complicated because it involves all the machinery needed to prove Theorem \ref{thm:mainresult}.

\begin{pro}\label{pro:sturm-sliding}
  Let $(\Gamma,S)$ be a Sturmian subshift, $\psi^{*\infty}$ be the homomorphism from
  $(\Gamma,S)$ to a subshift $(\Delta^\NN,S)$ induced by a local rule
  $\psi:\lc^m(\Gamma) \to\Delta$.

  If the rule $\psi$ is minimal and does not ignore the first letter,
  then the following conditions are equivalent:
  \begin{enumerate}
  \item\label{item:1} $\psi^{*n}$ is injective for some natural number,
  \item\label{item:2} $\psi^{*\infty}$ is injective.
  \item\label{item:3} $\psi^{*\infty}$ is not constant,
  \item\label{item:4} $\psi$ is not constant.
  \end{enumerate}
\end{pro}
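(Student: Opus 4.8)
The plan is to pass from the local rule $\psi$ to the cylinder-measurable partition of $\Gamma$ it induces, translate the four conditions into statements about that partition, and then close the cycle $(\ref{item:1})\Rightarrow(\ref{item:2})\Rightarrow(\ref{item:3})\Rightarrow(\ref{item:4})\Rightarrow(\ref{item:1})$; only the last implication is substantial, and it rests on Theorem~\ref{thm:precise}. To $\psi\colon\lc^m(\Gamma)\to\Delta$ I attach the partition $\rc_\psi=\{\,\bigcup\{[u]_0\mid u\in\lc^m(\Gamma),\ \psi(u)=a\}\mid a\in\Delta\,\}\setminus\{\emptyset\}$ of $\Gamma$. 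It is rougher than $[\Sigma^m]_0$, and by the lemma on local rules of prescribed width it is precisely the partition of $\Gamma$ into the fibres of the zeroth coordinate of the homomorphism $\psi^{*\infty}$, so it does not depend on the chosen width. Put $\bar\rc=\Phi^{-1}_\pc(\rc_\psi)$; by the stated properties of $\Phi^{-1}_\pc$ this is a Sturmian-measurable partition of $[0,1)$ with $\cc(\bar\rc)\subseteq[0,m]$, non-trivial exactly when $\psi$ is non-constant, and $\Phi^{-1}_\pc$ is a lattice isomorphism between cylinder-measurable and Sturmian-measurable partitions that carries $[\Sigma^n]_\ell$ to $T^{-\ell}\pc^n$, hence $S^{-1}$ to $T^{-1}$ and joins to joins.

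Unwinding the definitions I would record the following correspondences. (i) The member of $\rc_\psi^n$ through a point $x$ equals $\{\,y\in\Gamma\mid\psi^{*n}(y[0,m+n-1))=\psi^{*n}(x[0,m+n-1))\,\}$, so $\rc_\psi^n$ is always rougher than $[\Sigma^{m+n-1}]_0$, with $\rc_\psi^n=[\Sigma^{m+n-1}]_0$ if and only if $\psi^{*n}$ is injective on $\lc^{m+n-1}(\Gamma)$. (ii) $\psi$ is constant iff $\bar\rc$ is trivial. (iii) For non-constant $\psi$, the rule $\psi$ is minimal iff $\rc_\psi$ is not rougher than $[\Sigma^{m-1}]_0$, i.e.\ iff $m\in\cc(\bar\rc)$, i.e.\ iff $\max\cc(\bar\rc)=m$. (iv) $\psi$ ignores the first letter iff $\rc_\psi$ is rougher than $[\Sigma^{m-1}]_1=S^{-1}[\Sigma^{m-1}]_0$, i.e.\ iff $0\notin\cc(\bar\rc)$; so ``$\psi$ does not ignore the first letter'' means $\min\cc(\bar\rc)=0$. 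In (iii)--(iv) the only delicate point is $m=1$, which is settled by the remark that a non-trivial Sturmian-measurable partition of the circle has at least two cut-indices.

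With the dictionary in hand the three easy implications are immediate. $(\ref{item:1})\Rightarrow(\ref{item:2})$ is exactly the first lemma of Section~\ref{sec:block}. For $(\ref{item:2})\Rightarrow(\ref{item:3})$: a Sturmian subshift is infinite, so a constant map $\Gamma\to\Delta^\NN$ is not injective. For $(\ref{item:3})\Rightarrow(\ref{item:4})$ argue the contrapositive: if $\psi$ is constant then $(\psi^{*\infty}(x))_i=\psi(x[i,i+m))$ does not depend on $x$, so $\psi^{*\infty}$ is constant. The main step is $(\ref{item:4})\Rightarrow(\ref{item:1})$, the only place the two structural hypotheses on $\psi$ are used. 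Assume $\psi$ is minimal, does not ignore the first letter, and is non-constant. By (ii) the partition $\bar\rc$ is non-trivial Sturmian-measurable, and by (iii)--(iv) together with the discussion closing Section~\ref{sec:sturm-measurable}, the largest $\ell$ for which $\bar\rc<T^{-\ell}\pc^n$ holds for some $n$ is $\min\cc(\bar\rc)=0$, while with this $\ell$ the least admissible $n$ is $\max\cc(\bar\rc)-\min\cc(\bar\rc)=m$. Choosing $j\in\NN$ with $r_{j-1}\le m<r_j$ and applying Theorem~\ref{thm:precise} to $\bar\rc$ yields, writing $N=r_{j+3}+2r_j-m-2$,
$$\bar\rc^{\,N}=\pc^{\,r_{j+3}+2r_j-3}=\pc^{\,m+N-1}.$$
Since $\bar\rc^{\,N}=\Phi^{-1}_\pc(\rc_\psi^{\,N})$ and $\pc^{\,m+N-1}=\Phi^{-1}_\pc([\Sigma^{m+N-1}]_0)$, injectivity of $\Phi^{-1}_\pc$ gives $\rc_\psi^{\,N}=[\Sigma^{m+N-1}]_0$, and by (i) this is precisely the injectivity of $\psi^{*N}$. (If only existence of such an $N$ is wanted, Proposition~\ref{pro:main-sturm}, equivalently Theorem~\ref{thm:mainresult}, may be used in place of Theorem~\ref{thm:precise}.)

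The part I expect to cost the most care is building the dictionary, especially the identifications in (iii)--(iv) of ``minimal'' with $m\in\cc(\bar\rc)$ and of ``does not ignore the first letter'' with $0\in\cc(\bar\rc)$, together with the verification in the last step that Theorem~\ref{thm:precise} produces the full cylinder partition $[\Sigma^{m+N-1}]_0$ and not a proper preimage of it --- this is exactly where the two hypotheses on $\psi$ enter, and Example~\ref{exm:symbolic} shows they cannot be omitted. Everything else is a cited lemma or a routine unwinding of definitions.
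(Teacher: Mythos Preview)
Your proof is correct and follows essentially the same route as the paper's: both define the partition $\rc_\psi$ (the paper calls it $\rc$), use ``does not ignore the first letter'' and ``minimal'' to pin down $\ell=0$ and $m'=m$, and then invoke the main refinement result to obtain $\rc_\psi^{\,N}=[\Sigma^{m+N-1}]_0$, which is exactly the injectivity of $\psi^{*N}$. The only cosmetic difference is that you transfer to the circle via $\Phi_\pc^{-1}$ and apply Theorem~\ref{thm:precise} there, whereas the paper stays in $\Gamma$ and applies the already-translated Proposition~\ref{thm:precise-sturm}; since the latter is derived from the former by precisely this transfer, the arguments are the same.
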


\begin{proof} It is readily seen that the conditions above are ordered
  from the strongest to the weakest,
  i.e. $(\ref{item:1})\Rightarrow(\ref{item:2})\Rightarrow(\ref{item:3})\Rightarrow(\ref{item:4})$. It
  suffices to prove that (\ref{item:4}) implies
  (\ref{item:1}). Suppose that $\psi:\lc^m(\Gamma)\to\Delta$ is a
  non-constant minimal local rule that does not ignore the first
  letter.

  For $a\in\Delta$, define $R_a=\left(\psi^{*\infty}\right)^{-1}
  [a]$. The set $\rc=\{R_a \mid a\in\Delta\}\setminus \{ \emptyset \}$
  forms a partition and for every $n\in\NN$, $u\in\Delta^n$, we get
  that the set $R_u$, defined in Section \ref{sec:intro}, satisfies
  the following condition,
  \begin{align*}
    R_{u}&=\bigcap^{n-1}_{k=0}S^{-k}\left(\left(\psi^{*\infty}\right)^{-1}
      [u_k]\right)=\bigcap^{n-1}_{k=0}\left(\psi^{*\infty}\right)^{-1}\left(
      T^{-k}[u_k]\right)=
    \left(\psi^{*\infty}\right)^{-1} [u]\\
    &=\bigcup\{[v] \subset\Gamma \mid v\in\lc^{m+n-1}(\Gamma),
    \psi^{*n}(v)=u\}.
  \end{align*}

  In particular, $R_u$ is a union of cylinders $[v]$ from $[\Sigma^m]$
  for every $u\in\Delta^*$. Hence, $\rc$ is rougher than
  $[\Sigma^m]$. Let $\ell$ be the largest and $m'$ the least possible
  integer such that $\rc$ is rougher than $[\Sigma^{m'}]_\ell$.

  Since $\psi$ does not ignore the first letter, there are two words
  $u,v\in\lc^m(\Gamma)$ such that $u[1,m)=v[1,m)$ and
  $\psi(u)\neq\psi(v)$. In particular, $u_0\neq v_0$. We get that $u$
  and $v$ are in the same set from $[\Sigma^{m-1}]_1$, but they are
  not from the same set from $\rc$. Thus, $\rc$ is not rougher than
  $[\Sigma^m]_1$. It implies that $\ell=0$. The minimality of the
  local rule implies that there are two words $u,v\in\lc^m(\Gamma)$
  such that $u[0,m-1)=v[0,m-1)$ and $\psi(u)\neq\psi(v)$. In
  particular, $u_{m-1}\neq v_{m-1}$. We get that $u$ and $v$ are in
  the same set from $[\Sigma^{m-1}]_0$, but they are not from the same
  set from $\rc$. Thus, $\rc$ is not rougher than
  $[\Sigma^{m-1}]_0$. It implies that $m'=m$.

  By Proposition \ref{pro:precise-sturm}, there exists $n\in\NN$ such
  that $\rc^n=[\Sigma^{m+n-1}]$. Hence, for $u\in\Delta^n$, $R_u$ is
  either empty, or equal to one cylinder from $[\Sigma^{m+n-1}]$. It
  implies that there is at most one $v\in\Sigma^{m+n-1}$ such that
  $\psi^{*n}(v)=u$. Thus, $\psi^{*n}$ is injective.
\end{proof}

Let us remark that the rule $\psi$ mentioned in Example \ref{exm:symbolic} can be restricted to a Sturmian subshift and thus seemingly produce a counterexample to Proposition \ref{pro:sturm-sliding}.
However, it is not a counterexample since the local rule applied to a Sturmian subshift has to be restricted to the words from the language and that means for the Sturmian subshift that either $00$ or $11$ is not be taken into account. But then, either the local rule $\psi$ is not minimal (when $00$ is not in the language), or it is injective (when $11$ is not in the language).

\section{Open problems}
\label{sec:open-problems}
The first problem concerns the rotation of the unit circle and the
evolution of a partition that consists of finite unions of
intervals. We proved that if the endpoints of the intervals belong to
the past trajectory of the point zero, then the refinements of the
partition will eventually consist of connected sets, i.e.,
intervals. The question is if it remains to be true if we omit the
assumption on the endpoints of the intervals. It is not difficult to
see that it is not true in full generality. The counterexample is the
partition $\rc$ into two sets $[0,1/4)\cup [1/2,3/4)$ and
$[1/4,1/2)\cup [3/4,1)$. The symmetry of the partition ensures that
for every $n\in\NN$ and every $x\in [0,1/4)$, there exist set $M$ and
$N$ from $\pc^n$ such that $M$ contains points $x$ and $x+1/2$ and $N$
contains the points $x+1/4$ and $x+3/4$. 
In particular, $M$ and $N$
are not connected. By the same argument we can show that the counterexample is any 
non-trivial partition $\rc$ that is invariant under a rational rotation, where the 
 invariance under a rational rotation means that there exist a natural number $m\geq 2$ such that for every $x\in[0,1)$, the number $(x+1/m)\mod \ZZ$ belongs to the same set from $\rc$ as $x$ does. 
As far as we know, it is not known whether $\rc^n$ eventually consists of connected sets (intervals) in the case when $\rc$ is not invariant under a rational rotation. 

The second problem is related to the main result formulated in terms of sliding block codes:
for which subshifts can one give a positive to Question \ref{qes:1}?
In section \ref{sec:block}, we showed that for a certain high and low subword complexity subshifts the answer is negative.
Does it mean that only Sturmian subshifts and its factors admit a positive answer?
Might it be another characteristic of these subshifts?
Or does there exist other subshifts with this property?
A good candidate might be another coding of the rotation of the unit circle or some class of substitution subshifts.

\section*{Acknowledgments}

We would like to thank Pierre Arnoux, Val\'erie Berth\'e, Gilles Didier and Fabien Durand for their useful remarks.
The second author would like to thank for the support of Czech Science Foundation grants GA\v
CR, grant no. 13-35273P.

\bibliographystyle{alpha}
\bibliography{biblio}

\end{document}